\documentclass{article}
 
\usepackage{eurosym}
\usepackage{amssymb}
\usepackage{amsmath}
\usepackage{amsfonts}
\usepackage{graphicx}
\usepackage[pagebackref=true]{hyperref}
\usepackage{color}

\setcounter{MaxMatrixCols}{10}

\newtheorem{theorem}{Theorem}[section]
\newtheorem{corollary}[theorem]{Corollary}
\newtheorem{example}[theorem]{Example}
 \newtheorem{lemma}[theorem]{Lemma}

  \newtheorem{definition}[theorem]{Definition}

\newenvironment{proof}[1][Proof]{\noindent\textbf{#1.} }{\ \rule{0.5em}{0.5em}}
\setlength{\textwidth}{5.3in}

\begin{document}

\author{Do Trong Hoang \\
Faculty of Mathematics and Informatics \\
Hanoi University of Science and Technology\\
1 Dai Co Viet, Bach Mai, Hanoi, Vietnam\\
hoang.dotrong@hust.edu.vn \and Vadim E. Levit \\
Department of Mathematics\\
Ariel University, Israel\\
levitv@ariel.ac.il \and Eugen Mandrescu \\
Department of Computer Science\\
Holon Institute of Technology, Israel\\
eugen\_m@hit.ac.il }
\title{Structural properties and characterizations of $\mathbf{W}_{p}$ class%
}
\date{}
\maketitle

\begin{abstract}
 We establish new characterizations of graphs belonging to the   $\mathbf{W}_p$ class. In addition, we characterize locally triangle-free  $\alpha$-critical graphs in this class. As a consequence, our results yield a partial answer to a question raised by Plummer \cite{Plum93} in the case  $p=2$. 
 \end{abstract}

\textbf{Keywords}: $\alpha$-critical graph;    $\mathbf{W}_{p}$ graph;  well-covered graph.   \newline
\noindent \textbf{2010 Mathematics Subject Classification:} Primary: 05C69;
05C30. Secondary: 05C31; 05C35.

\section{Introduction}\label{sec0}

 Throughout this paper, $G$ is a finite, undirected,
loopless graph without multiple edges, with vertex set $V(G)$ of
cardinality $n\left( G\right) $,
and edge set $E(G)$.   An edge $e\in E(G)$ connecting vertices $x$ and $y$ is
denoted by $xy$ or $yx$. In this case, the vertices $x$ and $y$ are said to
be \textit{adjacent}. A subset of $V(G)$ consisting of pairwise non-adjacent vertices
is called an \emph{independent} set. Denote  $\mathrm{Ind}(G)$ by  the family of all the independent sets of $G$.  An independent set is \textit{maximal} if it
cannot be extended by adding more vertices. Among all independent sets, one
with the largest cardinality is called a \emph{maximum} independent\textit{\ }set,
and its size is denoted $\alpha (G)$, known as the \textit{independence
number} of $G$.

A graph is \textit{well-covered} if all of its maximal independent sets have
the same cardinality \cite{Plummer1970, Plum93}. The class of well-covered
graphs contains all complete graphs $K_n$ and all complete bipartite graphs of the
form $K_{n,n}$. The only cycles which are well-covered are $%
C_{3},C_{4},C_{5} $, and $C_{7}$. Characterizing well-covered graphs is
known to be a difficult problem, and much of the existing literature has
focused on specific subclasses of well-covered graphs (see the survey in 
\cite{Plum93}). In the context of classifying well-covered graphs, Staples, in her thesis \cite{StaplesThesis}, introduced the class of graphs  belonging to  $\mathbf{W}_p$, which is defined as follows. 
  \begin{definition} 
  For a positive integer $p$, a graph $G$ is said to belong to the   $%
\mathbf{W}_{p}$ class if $n(G)\geq p$ and, for every collection of $p$ pairwise
disjoint independent sets $A_{1},\ldots ,A_{p}$ in $G$, there exist $p$
pairwise disjoint maximum independent sets $S_{1},\ldots ,S_{p}$ such that $%
A_{i}\subseteq S_{i}$ for all $1\leq i\leq p$.  
\end{definition}  

Furthermore, the classes $\mathbf{W}_{p}$ form a descending
chain: 
\begin{equation*}
	\mathbf{W}_{1}\supseteq \mathbf{W}_{2} \supseteq
	\cdots \supseteq \mathbf{W}_{p}\supseteq \cdots .
\end{equation*}%
Several constructions of $\mathbf{W}_{p}$ graphs are presented in detail in 
\cite{Favaron1982,   Pinter1991, StaplesThesis,Staples, TV92}.
It follows immediately that a graph with at least
one vertex belongs to the  $\mathbf{W}_{1}$ class if and only if it is
well-covered. Moreover, a graph is in $\mathbf{W}_{2}$ if and only if it is
a \textit{1-well-covered graph} without isolated vertices; that is, it is
well-covered, and the deletion of any vertex results in a graph that remains
well-covered \cite{StaplesThesis, Staples, Pinter2}. All complete graphs are also in $%
\mathbf{W}_{2}$, but no complete bipartite graphs (except $K_{1,1}$) are in $%
\mathbf{W}_{2}$. The cycles $C_{3}$ and $C_{5}$ are the only cycles in $%
\mathbf{W}_{2}$.

Let $S$ be a subset of the vertices of a graph $G$. The subgraph of $G$ \textit{induced} by $S$ is denoted $G[S]$, and the induced subgraph on the complement of $S$ is written $G-S$.  The {\it neighborhood} of $S$ is defined as
\[
N_{G}(S) = \{ v \in V(G) -  S \mid \text{$uv \in E(G)$ for some $u \in S$} \},
\]
and its \textit{closed neighborhood}  is $N_{G}[S] = S \cup N_{G}(S)$. The localization  of $G$ with respect to $S$ is the graph $G_S = G - N_G[S]$. For a singleton set $S = \{v\}$, we simplify the notation by writing $N_G(v)$, $N_G[v]$, $G-v$, and $G_v$, respectively. The  \textit{degree}  of a vertex $v$, denoted $\deg_G(v)$, is the cardinality of $N_G(v)$; a vertex of degree zero is called \textit{isolated}.

  For an edge $ab$ of $G$, let $G_{ab}$ denote the induced subgraph $G - (N_G(a) \cup N_G(b))$. We also define $G - ab$ as the graph obtained by deleting the edge $ab$ from $G$ while retaining all vertices and the remaining edges. Clearly, $\alpha(G) \leq \alpha(G-ab) \leq \alpha(G)+1$. An edge $ab$ of $G$ is called \textit{critical} if $\alpha(G-ab) > \alpha(G)$, equivalently, if $\alpha(G_{ab}) = \alpha(G)+1$.   A graph $G$ is said to be \textit{$\alpha$-critical} if every edge of $G$ is critical. It is clear that all odd cycles, as well as all complete graphs, are $\alpha$-critical.  This concept appears to have been first formulated and studied by Erd\"os and Gallai  \cite{EG61}. However, a structural characterization of $\alpha$-critical graphs remains unknown. In \cite{Plum67}, Plummer constructed an infinite family of such graphs, which in particular contains all $\alpha$-critical graphs with fewer than eight vertices. Some related results on $\alpha$-critical graphs have also been studied in \cite{BHP67, B82, Plummer1970}.

  In  \cite[Pages 20-21]{Plum93}, Plummer posed several open questions, including one concerning
the characterization of graphs that are both $\alpha$-critical and belong to
the   $\mathbf{W}_1$ or  $\mathbf{W}_2$ class.  This problem remains unresolved.  The aim of the present work is to study this problem in a more general setting for $\alpha$-critical graphs belonging to the $\mathbf{W}_p$ class with $p \ge 1$. The main result of this paper provides a characterization of a sufficient condition for a graph to be both $\alpha$-critical and in the $\mathbf{W}_p$ class. Moreover,  in the case where $G$ is locally triangle-free, we establish an equivalent characterization of this class of graphs.

  The paper is organized as follows. In Section \ref{sec1}, we begin by
  recalling some basic notations together with fundamental properties of the $%
  \mathbf{W}_{p}$ class. Section \ref{sec2} deals with new characterizations
  of $\mathbf{W}_{p}$ graphs. The purpose of Section \ref{sec3} is to
  characterize $\alpha $-critical graphs belonging to $\mathbf{W}_{p}$
  classes. In particular, we provide a characterization for the class of
  locally triangle-free graphs.

\section{Structural properties} 
\label{sec1}

The following lemma  provides a necessary and sufficient condition for a graph to be well-covered, a result established in \cite[Theorem 5.3]{Plum93}, \cite[Lemma 1]{FHN93}, and \cite[Lemma 4.1]{HT16}.

\begin{lemma}\label{CP88a}  Let $G$ be a graph with $\alpha(G)>1$. Then 
  $G$ is a well-covered
graph if and only if  $G_{v}$ is also well-covered and $\alpha
(G_{v})=\alpha (G)-1$ for all $v\in V(G)$.
\end{lemma}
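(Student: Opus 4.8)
The plan is to base both directions on a single correspondence between the maximal independent sets of $G$ that contain a fixed vertex $v$ and the maximal independent sets of the localization $G_{v}=G-N_{G}[v]$. The crucial observation is that $V(G_{v})=V(G)\setminus N_{G}[v]$, so every vertex of $G_{v}$ is distinct from $v$ and non-adjacent to $v$. First I would verify that the assignment $S\mapsto S\setminus\{v\}$ carries a maximal independent set $S$ of $G$ with $v\in S$ to a maximal independent set of $G_{v}$. Indeed, since $S$ is independent and $v\in S$, no neighbor of $v$ lies in $S$, hence $S\setminus\{v\}\subseteq V(G)\setminus N_{G}[v]=V(G_{v})$ and is independent there; were it enlargeable by some $w\in V(G_{v})$, then $w$ would be adjacent neither to $v$ nor to any vertex of $S\setminus\{v\}$, so $S\cup\{w\}$ would be independent in $G$, contradicting the maximality of $S$. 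Conversely, I would check that $T\mapsto T\cup\{v\}$ inverts this map, sending a maximal independent set $T$ of $G_{v}$ to a maximal independent set of $G$ containing $v$. This yields a bijection that uniformly decreases cardinality by exactly one.

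With this correspondence in hand, the forward direction is immediate. Assuming $G$ is well-covered, every maximal independent set of $G$ has cardinality $\alpha(G)$; in particular this holds for every maximal independent set of $G$ containing $v$, so through the bijection every maximal independent set of $G_{v}$ has cardinality $\alpha(G)-1$. Since $\{v\}$ extends to at least one maximal independent set of $G$, the graph $G_{v}$ has at least one maximal independent set, and all of them share the common size $\alpha(G)-1$; hence $G_{v}$ is well-covered with $\alpha(G_{v})=\alpha(G)-1$.

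For the converse I would take an arbitrary maximal independent set $S$ of $G$ and show $|S|=\alpha(G)$. Because $\alpha(G)>1$, the graph $G$ has vertices, so $S\neq\emptyset$; choose any $v\in S$. By the correspondence, $S\setminus\{v\}$ is a maximal independent set of $G_{v}$, which by hypothesis is well-covered with $\alpha(G_{v})=\alpha(G)-1$. Therefore $|S\setminus\{v\}|=\alpha(G)-1$, giving $|S|=\alpha(G)$. As $S$ was arbitrary, all maximal independent sets of $G$ have the same cardinality $\alpha(G)$, so $G$ is well-covered.

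The only genuinely delicate point is the verification of maximality in both directions of the correspondence, that is, ensuring that no vertex of $G_{v}$ can be appended; this rests entirely on the fact that $V(G_{v})$ excludes $v$ together with all its neighbors, so that extensions in $G_{v}$ and extensions in $G$ (past a set containing $v$) are genuinely the same. Once this is secured, both implications reduce to counting cardinalities through the bijection, and the remainder is routine bookkeeping.
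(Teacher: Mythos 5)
Your proposal is correct: the bijection $S\mapsto S\setminus\{v\}$ between maximal independent sets of $G$ containing $v$ and maximal independent sets of $G_{v}$ is established carefully, and both directions follow cleanly from it. The paper itself gives no proof of this lemma (it is quoted from the literature, e.g.\ Finbow--Hartnell--Nowakowski and Hoang--Trung), and your argument is essentially the standard one found in those sources, so there is nothing to reconcile.
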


\begin{lemma}
 {\rm (\cite[Lemma 1]{FHN93})} \label{CP88b} If $G$ is a well-covered graph,
and $S$ is an independent set of $G$ such that $|S|<\alpha (G)$, then $G_{S}$
is also well-covered and $\alpha (G)=\alpha (G_{S})+|S|$.
\end{lemma}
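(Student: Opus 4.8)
The plan is to prove the statement directly from the definition of well-covered, by setting up a correspondence between the maximal independent sets of $G_S$ and those maximal independent sets of $G$ that contain $S$. Fix an arbitrary maximal independent set $T$ of the localization $G_S = G - N_G[S]$. First I would check that $S \cup T$ is independent in $G$: the set $S$ is independent by hypothesis, $T$ is independent in the induced subgraph $G_S$ hence in $G$, and no edge joins $S$ to $T$ because $T \subseteq V(G_S) = V(G) - N_G[S]$ avoids the neighborhood of $S$. Since $S \subseteq N_G[S]$ while $T \cap N_G[S] = \emptyset$, the sets $S$ and $T$ are disjoint, so $|S \cup T| = |S| + |T|$.

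The key step is to show that $S \cup T$ is in fact a \emph{maximal} independent set of $G$. Take any vertex $w \in V(G) - (S \cup T)$. If $w \in N_G(S)$, then $w$ is adjacent to some vertex of $S$ and cannot be added. Otherwise $w \notin N_G[S]$, so $w \in V(G_S)$; since $w \notin T$ and $T$ is maximal in $G_S$, the vertex $w$ has a neighbor inside $T$. In either case $S \cup T$ cannot be enlarged, which establishes maximality. This is the step that carries the whole argument, and the only point requiring care is the clean case split according to whether $w$ lies inside or outside $N_G[S]$.

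Now, because $G$ is well-covered, every maximal independent set of $G$ has cardinality $\alpha(G)$; in particular $|S| + |T| = |S \cup T| = \alpha(G)$, whence $|T| = \alpha(G) - |S|$. As $T$ was an arbitrary maximal independent set of $G_S$, all maximal independent sets of $G_S$ share the common size $\alpha(G) - |S|$, which is exactly the assertion that $G_S$ is well-covered together with $\alpha(G_S) = \alpha(G) - |S|$, i.e. $\alpha(G) = \alpha(G_S) + |S|$. The hypothesis $|S| < \alpha(G)$ guarantees $\alpha(G_S) = \alpha(G) - |S| \geq 1$, so $G_S$ has at least one vertex and the statement is non-vacuous. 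An alternative route would induct on $|S|$: writing $S = S' \cup \{v\}$, one applies Lemma \ref{CP88a} to pass from $G$ to $G_v$ (using $\alpha(G) > |S| \geq 1$) and invokes the induction hypothesis for $S'$ in the well-covered graph $G_v$; the main bookkeeping there is the identity $(G_v)_{S'} = G_S$, which follows from $N_G[v] \cup N_{G_v}[S'] = N_G[S]$.
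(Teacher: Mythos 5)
Your proof is correct. The paper itself offers no proof of this lemma --- it is quoted directly from \cite[Lemma 1]{FHN93} --- so there is no in-paper argument to compare against; your direct argument is the standard one: the map $T \mapsto S \cup T$ carries maximal independent sets of $G_S$ to maximal independent sets of $G$, and well-coveredness of $G$ then forces $|T| = \alpha(G) - |S|$ for every such $T$. The two delicate points are both handled: maximality of $S \cup T$ via the case split on whether the vertex $w$ lies in $N_G[S]$, and non-vacuity of $G_S$ (if $G_S$ had no vertices, its unique maximal independent set $\emptyset$ would violate the size formula just derived --- equivalently, $S$ itself would be a maximal independent set of $G$ of size less than $\alpha(G)$, contradicting well-coveredness). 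Your sketched alternative, inducting on $|S|$ through Lemma \ref{CP88a} with the identity $(G_v)_{S'} = G_S$, also works and is essentially what Lemma \ref{commutativity} of the paper is designed to support, but the direct argument is cleaner and self-contained.
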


 We shall invoke the following lemma at several points in this paper. 
\begin{lemma}
	\label{commutativity} If $S,T\subseteq V(G) $ such that $%
	N_{G}[S]\cap T=\emptyset $ and $N_{G}[T]\cap S=\emptyset $,
	then 
	\begin{equation*}
		(G_{S})_{T}=G_{S\cup T}=(G_{T})_{S}.
	\end{equation*}
\end{lemma}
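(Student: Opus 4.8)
The plan is to show that all three graphs are induced subgraphs of $G$ on one and the same vertex set; since an induced subgraph is completely determined by its vertex set, equality of vertex sets will immediately give equality of graphs. First I would record that the two hypotheses are exactly what is needed to keep both nested localizations well-defined: from $N_G[S]\cap T=\emptyset$ we get $T\subseteq V(G)\setminus N_G[S]=V(G_S)$, so $(G_S)_T$ is meaningful, and symmetrically $N_G[T]\cap S=\emptyset$ gives $S\subseteq V(G_T)$, so $(G_T)_S$ is meaningful. I would also note the elementary identity that a closed neighborhood of a union is the union of the closed neighborhoods, $N_G[S\cup T]=\bigcup_{u\in S\cup T}N_G[u]=N_G[S]\cup N_G[T]$, whence $V(G_{S\cup T})=V(G)\setminus\bigl(N_G[S]\cup N_G[T]\bigr)$.

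The substantive step is to compute the vertex set of $(G_S)_T$ and match it. Writing $W:=V(G_S)=V(G)\setminus N_G[S]$, the point to be careful about is that the closed neighborhood of $T$ must here be taken inside the induced subgraph $G_S$, not inside $G$. Because $G_S$ is an \emph{induced} subgraph and $T\subseteq W$, adjacency in $G_S$ coincides with adjacency in $G$ for all vertices of $W$, so no edges among surviving vertices are created or lost; this yields $N_{G_S}[T]=N_G[T]\cap W$. Consequently
\[
V\bigl((G_S)_T\bigr)=W\setminus N_{G_S}[T]=W\setminus\bigl(N_G[T]\cap W\bigr)=W\setminus N_G[T]=V(G)\setminus\bigl(N_G[S]\cup N_G[T]\bigr),
\]
which is precisely $V(G_{S\cup T})$. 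Since $(G_S)_T$ and $G_{S\cup T}$ are both the subgraph of $G$ induced on this common set, they are equal. Interchanging the roles of $S$ and $T$, now invoking $N_G[T]\cap S=\emptyset$, the identical argument gives $(G_T)_S=G_{S\cup T}$, closing the chain of equalities.

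I do not expect a genuine obstacle here; the proof is essentially set-theoretic bookkeeping. The only place demanding attention is the identity $N_{G_S}[T]=N_G[T]\cap W$, i.e.\ the verification that localizing commutes with restriction of adjacency to an induced subgraph, together with checking that the two symmetric hypotheses are precisely what guarantees both iterated localizations are defined and symmetric.
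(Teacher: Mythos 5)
Your proof is correct and follows essentially the same route as the paper's: both reduce the claim to the set identity $N_G[S\cup T]=N_G[S]\cup N_G[T]$ and the observation that the hypotheses guarantee $T\subseteq V(G_S)$ and $S\subseteq V(G_T)$, so that all three graphs are the induced subgraph of $G$ on $V(G)\setminus\bigl(N_G[S]\cup N_G[T]\bigr)$. If anything, you are more careful than the paper, which silently uses the identity $N_{G_S}[T]=N_G[T]\cap V(G_S)$ that you verify explicitly.
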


\begin{proof}
	From the assumption, we obtain  the symmetric containments $S \subseteq V(G_{T})$ and $T \subseteq V(G_{S})$. Hence, the order of localization is commutative; that is,  
	\begin{eqnarray*}
		(G_{S})_{T} &=& G_{S} - N_G[T] = \left(G - N_G[S]\right) - N_G[T] = G - N_G[S \cup T]\\
		&=&   \left(G - N_G[T]\right) - N_G[S] = G_{T} - N_G[S] = (G_{T})_{S},
	\end{eqnarray*}     
	which completes the proof. 
\end{proof}

 A vertex $v \in V(G)$ is called a \textit{shedding} if, for every independent set $S$ of $G_v$, there exists a vertex $u \in N_G(v)$ such that $S \cup \{u\}$ is also  an  independent set \cite{W09}. We denote by $\mathrm{Shed}(G)$ the set of all shedding vertices of $G$. It is immediate that no isolated vertex can be a shedding vertex. Conversely, every vertex of $G$  with  degree $n(G)-1$ is necessarily a shedding vertex of $G$.

   As stated previously, graphs with at least two vertices in the class $\mathbf{W}_2$, equivalently in the class of 1-well-covered graphs, are precisely those graphs $G$ that are well-covered and for which $G-v$ is also well-covered with $\alpha(G-v) = \alpha(G)$. Note that if $v$ is not an isolated vertex of well-covered graph $G$, then $\alpha(G-v) = \alpha(G)$. Thus, in order to characterize this class, one must determine the criterion under which $G-v$ is well-covered. In \cite[Lemma 2]{FHN93}, Finbow, Hartnell, and Nowakowski established a necessary and sufficient condition for determining when $G-v$ is well-covered. Later, Castrillón, Cruz, and Reyes \cite[Lemma 2]{CCR16} provided an additional characterization in terms of shedding vertices, stated as follows: 

\begin{theorem} \label{thm_G-v}
 Let $G$ be a well-covered graph. Given a non-isolated vertex $%
	v\in V(G)$, the following conditions are equivalent:
	
	\begin{enumerate} 
		\item[(a)] $G-v$ is well-covered;
		
		\item[(b)] $\left\vert N_{G}(v)-N_{G}(S)\right\vert \geq 1$ for every independent
		set $S$ of $G_{v}$;
		
		\item[(c)] there is no independent set $S\subseteq V(G_{v})$ such that $v$ is
		isolated in $G_{S}$;
		
		\item[(d)] $v$ is a shedding vertex.
	\end{enumerate}
\end{theorem}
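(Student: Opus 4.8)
The plan is to prove the chain of equivalences by first disposing of the purely definitional parts and then concentrating on the genuine content, which lies in the equivalence between (a) and any of the other three conditions. The starting point is a single translation lemma. For an independent set $S \subseteq V(G_v)$ we have $S \cap N_G[v] = \emptyset$, so $v \in V(G_S)$, and a neighbour $u \in N_G(v)$ survives in $G_S = G - N_G[S]$ precisely when $u \notin N_G(S)$. Hence the surviving neighbours of $v$ in $G_S$ are exactly the vertices of $N_G(v) - N_G(S)$, and consequently
\begin{equation*}
 v \text{ is isolated in } G_S \iff N_G(v) - N_G(S) = \emptyset.
\end{equation*}
From this observation the equivalence of (b) and (c) is immediate, since both assert that $\left|N_G(v)-N_G(S)\right| \geq 1$ holds for every independent set $S$ of $G_v$. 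For (d), note that $S \cup \{u\}$ is independent for some $u \in N_G(v)$ exactly when some neighbour of $v$ avoids $N_G(S)$, i.e. when $N_G(v) - N_G(S) \neq \emptyset$; thus (d) is also just a restatement of (b). This settles (b) $\Leftrightarrow$ (c) $\Leftrightarrow$ (d) with no real work.

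The substantive step is (a) $\Leftrightarrow$ (c), and I would argue both directions by contraposition, using the preliminary fact that $\alpha(G-v) = \alpha(G)$ for a non-isolated vertex $v$ of a well-covered graph (which follows by extending a neighbour of $v$ to a maximal, hence maximum, independent set that then necessarily avoids $v$). For the implication $\neg$(c) $\Rightarrow$ $\neg$(a), suppose $S \subseteq V(G_v)$ is independent with $v$ isolated in $G_S$. One first checks that $|S| < \alpha(G)$, so Lemma \ref{CP88b} applies and gives that $G_S$ is well-covered with $\alpha(G_S) = \alpha(G) - |S|$. Taking a maximum independent set $T$ of $G_S$, which must contain the isolated vertex $v$, the set $S \cup (T - \{v\})$ is independent in $G - v$ of size $\alpha(G) - 1$. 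The key point is to verify that this set is maximal in $G - v$: this uses that every vertex outside $N_G[S]$ other than $v$ has a neighbour in $T$, and that such a neighbour cannot be $v$ precisely because $v$ is isolated in $G_S$. This exhibits a maximal independent set of $G-v$ strictly smaller than $\alpha(G-v) = \alpha(G)$, so $G-v$ is not well-covered.

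For the reverse direction $\neg$(a) $\Rightarrow$ $\neg$(c), suppose $G - v$ is not well-covered, and pick a maximal independent set $M$ of $G-v$ with $|M| < \alpha(G)$. A short case analysis shows $v$ can have no neighbour in $M$: otherwise $M$ would already be maximal, hence maximum, in $G$, contradicting $|M| < \alpha(G)$. Thus $M \subseteq V(G_v)$ and we may set $S = M$. It then remains to check that $v$ is isolated in $G_S = G_M$, i.e. that $N_G(v) \subseteq N_G(M)$: any $u \in N_G(v)$ lies in $V(G-v)$ and is not in $M$, so by maximality of $M$ in $G-v$ it has a neighbour in $M$, whence $u \in N_G(M)$. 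This produces the independent set required to violate (c). The only delicate points are the maximality verification in the forward direction and the case analysis pinning down that $M$ avoids $N_G(v)$; everything else is routine bookkeeping with the localization identities.
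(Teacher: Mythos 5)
Your proof is correct: the translation identity $N_{G_S}(v)=N_G(v)-N_G(S)$ for independent $S\subseteq V(G_v)$ does make (b), (c), (d) interchangeable by pure definition-unwinding, and both contrapositive arguments for (a) $\Leftrightarrow$ (c) go through (the maximality check on $S\cup(T-\{v\})$ and the verification that a deficient maximal independent set $M$ of $G-v$ must avoid $N_G[v]$ and dominate $N_G(v)$ are exactly the right points to isolate, and they hold). One thing you could not have known: the paper does not prove this theorem at all — it imports it from \cite[Lemma 2]{FHN93} and \cite[Lemma 2]{CCR16} — so there is no internal proof to compare against, and your argument fills a gap the paper leaves to citation. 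The closest internal analogue is Theorem \ref{mainthm}, the $\mathbf{W}_p$-generalization of (a)--(c), and its proof takes a genuinely different route: for (a) $\Rightarrow$ (b) it uses the defining property of $\mathbf{W}_p$ to extend $S$ and the leftover neighbours $u_1,\ldots,u_t$ to pairwise disjoint maximum independent sets of $G-v$, producing an independent set of size $\alpha(G)+1$; for (b) $\Rightarrow$ (a) it runs an induction on $\alpha(G)$ through the localization criterion of Theorem \ref{mth1} — and for its base case $p=1$ it simply invokes the present theorem. Your approach buys a short, self-contained, induction-free proof at $p=1$, relying only on Lemma \ref{CP88b} and elementary maximality arguments; what it gives up is generality, since the construction of a single deficient maximal independent set in the direction $\neg$(c) $\Rightarrow$ $\neg$(a) is tailored to well-coveredness and does not directly yield the quantitative bound $\left\vert N_G(v)-N_G(S)\right\vert \geq p$ needed in the $\mathbf{W}_p$ setting.
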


The differential of a set $A\subseteq V(G)$ is $\partial (A)=\left\vert
N_G(A)-A\right\vert -|A|$ (\cite{BF12}). Clearly, if $S$ is independent, then $%
\partial (S)=\left\vert N_G(S)\right\vert -|S|$.  Analogous to the case of well-covered graphs, the second and third authors have derived the following characterizations of graphs in $\mathbf{W}_2$ as follows:   

\begin{theorem}
	 {\rm (\cite[Theorem 3.9]{LM2017})}  \label{th12}Let $G$\ be a well-covered graph without isolated
	vertices. Then the following assertions are equivalent:
	
	\begin{enumerate}
		\item[(a)] $G$\emph{\ }belongs to the  $\mathbf{W}_{2}$ class;
		
		\item[(b)] the differential function is monotonic over $\mathrm{Ind}(G)$, i.e., if $A\subseteq B\in \mathrm{Ind}(G) $, then $%
		\partial(A) \leq \partial(B) $;
		
		\item[(c)] $\mathrm{Shed}(G) =V(G) $;
		
		\item[(d)] no independent set $S$ leaves an isolated vertex in $G-N_{G}[S]$;
		
		\item[(e)] $G_v \in \mathbf{W}_{2}$ for every $v\in
		V(G) $.
	\end{enumerate}
\end{theorem}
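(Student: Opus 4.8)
The plan is to treat conditions (a), (c), (d) as a core cluster that follows almost directly from Theorem~\ref{thm_G-v}, and then to incorporate (b) through a differential identity and (e) through the commutativity of localizations in Lemma~\ref{commutativity}. Throughout I would assume $\alpha(G)\ge 2$, since when $\alpha(G)=1$ the graph $G$ is complete and each assertion is verified by inspection (with the empty localizations handled by convention). Because $G$ is well-covered without isolated vertices, membership in $\mathbf{W}_2$ is equivalent to $G-v$ being well-covered for every $v$, and by (a)$\Leftrightarrow$(d) of Theorem~\ref{thm_G-v} the latter holds for a non-isolated $v$ exactly when $v$ is a shedding vertex; ranging over all $v$ yields (a)$\Leftrightarrow$(c). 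For (c)$\Leftrightarrow$(d) I would invoke part (c) of Theorem~\ref{thm_G-v}: a vertex $v$ fails to be shedding precisely when some independent $S\subseteq V(G_v)$ leaves $v$ isolated in $G_S$. The one point to observe is that any independent set $S$ that leaves some vertex $u$ isolated in $G_S$ automatically satisfies $S\subseteq V(G_u)$, since $u\notin N_G[S]$ forces both $u\notin S$ and $u\notin N_G(S)$, i.e. $u$ is non-adjacent to $S$; hence (c) and (d) are the same statement read from opposite ends.

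To reach (b), I would first reduce monotonicity of $\partial$ to single-vertex extensions: if $A\subseteq B$ are independent, I insert the vertices of $B\setminus A$ one at a time, each partial set being independent as a subset of $B$, so general monotonicity follows by telescoping from the one-step inequalities. For a single step, a direct count using $N_G(A\cup\{v\})=N_G(A)\cup N_G(v)$ (valid because $A\cup\{v\}$ is independent) gives
\[
\partial(A\cup\{v\})-\partial(A)=\bigl|N_G(v)-N_G(A)\bigr|-1 .
\]
Thus $\partial(A)\le\partial(A\cup\{v\})$ holds for every independent $A$ with $A\cup\{v\}$ independent (equivalently, every independent set $A$ of $G_v$) if and only if $\bigl|N_G(v)-N_G(A)\bigr|\ge 1$ for all such $A$, which is exactly condition (b) of Theorem~\ref{thm_G-v} for the vertex $v$. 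Quantifying over all $v$ and applying (a)$\Leftrightarrow$(b) of that theorem identifies the monotonicity condition here with membership in $\mathbf{W}_2$.

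For (e) I would exploit the core equivalence (a)$\Leftrightarrow$(d) applied to localizations, which is legitimate because each relevant subgraph is again well-covered (Lemma~\ref{CP88a}) and, as I verify, free of isolated vertices. For (d)$\Rightarrow$(e): taking $S=\{v\}$ in (d) shows $G_v$ has no isolated vertex, and for any independent $S$ of $G_v$ Lemma~\ref{commutativity} gives $(G_v)_S=G_{\{v\}\cup S}$, whose absence of isolated vertices is guaranteed by (d) for $G$; hence $G_v$ satisfies (d) and so lies in $\mathbf{W}_2$. For (e)$\Rightarrow$(d): if some independent $S$ left a vertex $u$ isolated in $G_S$, then $S\neq\emptyset$ (otherwise $u$ would be isolated in $G$), so choosing any $w\in S$ and setting $S'=S\setminus\{w\}$, Lemma~\ref{commutativity} yields $(G_w)_{S'}=G_S$. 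Since $G_w\in\mathbf{W}_2$ is well-covered and without isolated vertices, the core equivalence shows $G_w$ satisfies (d); but $S'$ is an independent set of $G_w$ leaving $u$ isolated in $(G_w)_{S'}$, a contradiction.

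I expect the main obstacle to be the bookkeeping around localizations in the (e) arguments: one must repeatedly check the disjointness hypotheses $N_G[S]\cap T=\emptyset$ and $N_G[T]\cap S=\emptyset$ of Lemma~\ref{commutativity}, confirm that every subgraph to which the core equivalence is applied is genuinely well-covered and isolated-vertex-free (so that $\mathbf{W}_2$ membership is actually available), and keep the logical order straight, so that the block (a)$\Leftrightarrow$(b)$\Leftrightarrow$(c)$\Leftrightarrow$(d) is first established as a general statement about well-covered graphs without isolated vertices before it is invoked on the subgraphs $G_v$ and $G_w$. By contrast, the differential identity and the telescoping reduction for (b) are routine once set up, and the boundary case $\alpha(G)=1$ need only be acknowledged separately to avoid empty localizations.
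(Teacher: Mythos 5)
The paper never proves Theorem~\ref{th12}: it is imported verbatim from \cite[Theorem 3.9]{LM2017}, so there is no internal proof to compare yours against. Judged on its own, your derivation is correct and, usefully, is self-contained relative to what the paper does state. The cluster (a)$\Leftrightarrow$(c)$\Leftrightarrow$(d) is handled exactly right: Theorem~\ref{lem_equiv}(a) with $p=2$, together with the remark that $\alpha(G-v)=\alpha(G)$ for non-isolated $v$ in a well-covered graph, reduces (a) to ``$G-v$ well-covered for all $v$'', which Theorem~\ref{thm_G-v} converts into (c); and your observation that a vertex $u$ left isolated by $S$ automatically satisfies $S\subseteq V(G_u)$ is precisely what makes (c) and (d) the same statement. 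The identity $\partial(A\cup\{v\})-\partial(A)=\left\vert N_G(v)-N_G(A)\right\vert-1$ is verified correctly (independence of $A\cup\{v\}$ is what gives $N_G(A\cup\{v\})=N_G(A)\cup N_G(v)$), and the telescoping reduction correctly identifies condition (b) of the theorem with condition (b) of Theorem~\ref{thm_G-v} quantified over all vertices, hence with (c). In both directions involving (e) you check the disjointness hypotheses of Lemma~\ref{commutativity} and re-establish that the subgraph in play ($G_v$, respectively $G_w$) is well-covered (Lemma~\ref{CP88a}) and isolated-vertex-free before invoking the core equivalence on it; that is exactly the bookkeeping the argument needs, and it goes through.

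One caveat deserves more than the phrase ``handled by convention''. When $\alpha(G)=1$, $G$ is complete and every localization $G_v$ is the empty graph, which under the paper's definition of $\mathbf{W}_p$ (requiring $n(G)\geq p$) does \emph{not} belong to $\mathbf{W}_2$; so assertion (e) literally fails while (a)--(d) hold. This is an edge-case defect of the statement itself rather than of your argument for $\alpha(G)\geq 2$, but since you cannot verify (e) ``by inspection'' there, you should state explicitly the convention you adopt (e.g., that the condition on $G_v$ is read as vacuous when $V(G_v)=\emptyset$), or restrict the equivalence involving (e) to $\alpha(G)\geq 2$.
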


In the general case,  Staples \cite[Lemma and Theorem 1]{Staples} identified several initial characterizations of the  $\mathbf{W}_{p}$ class, as follows.

\begin{theorem}
\label{lem_equiv}   Let $p\geq2$. Then

\begin{enumerate}
\item[(a)] $G\in \mathbf{W}_{p}$ if and only if $G-v\in \mathbf{W}_{p-1}$ and $%
\alpha (G)=\alpha (G-v)$ for all $v\in V(G)$.

\item[(b)] $G\in \mathbf{W}_{p}$ if and only if for every set $A\subseteq V(G)$
with $\left\vert A\right\vert =p-1$, the graph $G-A$ is well-covered with $%
\alpha (G-A)=\alpha (G)$.
\end{enumerate}
\end{theorem}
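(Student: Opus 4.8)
My plan is to prove (a) by induction on $p$, taking $p=2$ as the base case, and then to deduce (b) from (a). The base case is exactly the characterization of the $\mathbf{W}_{2}$ class recalled in Section~\ref{sec1}: a graph lies in $\mathbf{W}_2$ iff it is well-covered with no isolated vertex and $G-v$ is well-covered for every $v$, and for well-covered $G$ the equality $\alpha(G-v)=\alpha(G)$ is equivalent to $v$ being non-isolated (Theorems~\ref{thm_G-v} and \ref{th12}). Both forward implications are short and share one device: feed the prescribed sets to the defining property of $\mathbf{W}_p$ after padding with extra singletons or empty sets, then read off one output set. For (a)$\Rightarrow$, given $v$ and disjoint independent sets $B_1,\dots,B_{p-1}$ of $G-v$, apply the $\mathbf{W}_p$ property in $G$ to $B_1,\dots,B_{p-1},\{v\}$; the first $p-1$ output sets are disjoint from the one containing $v$, hence are maximum independent sets of $G-v$ extending the $B_i$, and the all-empty instance gives $\alpha(G-v)=\alpha(G)$. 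For (b)$\Rightarrow$, given $A=\{v_1,\dots,v_{p-1}\}$ and a maximal independent set $M$ of $G-A$, apply the property to $M,\{v_1\},\dots,\{v_{p-1}\}$; the output set $S_0\supseteq M$ misses every $v_i$, so $S_0\subseteq V(G-A)$, and maximality of $M$ forces $M=S_0$, whence $|M|=\alpha(G)$. The same trick yields the lemma I will call $\Psi$: \emph{if $H\in\mathbf{W}_q$ with $q\ge 2$ and $I$ is an independent set of $H$, then $H-I\in\mathbf{W}_{q-1}$ and $\alpha(H-I)=\alpha(H)$}, proved by feeding $I$ together with $q-1$ prescribed disjoint independent sets of $H-I$.

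The substance is the converse of (a): assuming $G-v\in\mathbf{W}_{p-1}$ and $\alpha(G-v)=\alpha(G)$ for every $v$, prove $G\in\mathbf{W}_p$. First a preliminary: $G$ is itself well-covered with no isolated vertex. No vertex is isolated, since that would force $\alpha(G-v)=\alpha(G)-1$; and if $M$ is a maximal independent set of $G$ and $u\notin M$ (such $u$ exists because $G$ has an edge), then $M$ stays maximal in the well-covered graph $G-u\in\mathbf{W}_{p-1}\subseteq\mathbf{W}_1$, so $|M|=\alpha(G-u)=\alpha(G)$. Also, by the descending chain $G-v\in\mathbf{W}_{p-2}$ with $\alpha(G-v)=\alpha(G)$ for all $v$, whence $G\in\mathbf{W}_{p-1}$ by the inductive case of (a). Now fix disjoint independent sets $A_1,\dots,A_p$ and put $B=A_1\cup\dots\cup A_{p-1}$. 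The construction has two steps: \textbf{(Reserve)} choose a maximum independent set $S_p$ with $A_p\subseteq S_p$ and $S_p\cap B=\emptyset$; \textbf{(Reduce)} show $G-S_p\in\mathbf{W}_{p-1}$ with $\alpha(G-S_p)=\alpha(G)$, so that, since $A_1,\dots,A_{p-1}$ avoid $S_p$ and hence live in $G-S_p$, the $\mathbf{W}_{p-1}$ property produces disjoint maximum independent sets $S_1,\dots,S_{p-1}$ of $G-S_p$ extending them. Together with $S_p$, these are the required $p$ disjoint maximum independent sets.

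The Reduce step is clean and uses the inductive hypothesis. For any $u\in V(G-S_p)$ we have $u\notin S_p$, so $S_p$ is a maximum independent set of $G-u$ (as $|S_p|=\alpha(G)=\alpha(G-u)$); applying $\Psi$ to $G-u\in\mathbf{W}_{p-1}$ with the independent set $S_p$ gives $(G-S_p)-u=(G-u)-S_p\in\mathbf{W}_{p-2}$ with $\alpha((G-S_p)-u)=\alpha(G)$. As this holds for every $u$, and $\alpha(G-S_p)=\alpha(G)$ follows by monotonicity, the inductive form of (a) at level $p-1$ yields $G-S_p\in\mathbf{W}_{p-1}$.

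I expect the \textbf{Reserve step to be the main obstacle}. This is where one must bootstrap from the weak single-vertex hypothesis (delete one vertex, drop by one index) to the strong conclusion that the possibly large set $A_p$ can be completed to a maximum independent set avoiding \emph{all} of $B$. The natural attack is to reduce Reserve to the assertion that $G-B$ is well-covered with $\alpha(G-B)=\alpha(G)$ — then any maximal, hence maximum, independent set of $G-B$ containing $A_p$ serves as $S_p$ — and to obtain this by deleting $A_1,\dots,A_{p-1}$ one at a time via $\Psi$, controlling the nested localizations with Lemmas~\ref{commutativity} and \ref{CP88b}. The delicate point is bookkeeping: starting from $G\in\mathbf{W}_{p-1}$, iterating $\Psi$ only licenses $p-2$ such deletions before reaching $\mathbf{W}_1$, so the last set $A_{p-1}$ cannot be removed by this crude counting; resolving it requires either a more careful, simultaneous treatment of the final two sets through the full inductive characterization, or a Hall-type augmentation argument that enlarges the $A_i$ toward maximum sets while preserving joint extendability. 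This is the genuinely hard point, and it is essentially the combinatorial heart of Staples' theorem. Finally, (b)$\Leftarrow$ follows from (a) by induction on $p$: for each vertex $v$, the $(p-1)$-subset hypothesis on $G$ supplies the $(p-2)$-subset hypothesis on $G-v$, so $G-v\in\mathbf{W}_{p-1}$ by the inductive case of (b), and (a)$\Leftarrow$ then gives $G\in\mathbf{W}_p$.
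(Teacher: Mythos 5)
The paper itself does not prove Theorem \ref{lem_equiv} --- it quotes the result from Staples --- so your proposal has to stand on its own, and most of it does: the two forward implications, your lemma $\Psi$, the preliminary facts (no isolated vertices, $G$ well-covered, $G\in\mathbf{W}_{p-1}$), the Reduce step, and the derivation of (b) from (a) are all correct as written, with the base case $p=2$ legitimately delegated to Theorems \ref{thm_G-v} and \ref{th12}. But the proof is incomplete at exactly the point you flag yourself: the Reserve step --- the existence of a maximum independent set $S_p\supseteq A_p$ with $S_p\cap(A_1\cup\cdots\cup A_{p-1})=\emptyset$ --- is never established. You correctly diagnose why the naive attack fails (iterating $\Psi$ from $G\in\mathbf{W}_{p-1}$ licenses only $p-2$ deletions), but then you only list candidate strategies and call it ``the genuinely hard point.'' Since Reduce is useless without Reserve, the hard direction of (a), and with it the whole theorem, remains unproven; naming an obstacle is not overcoming it, so this is a genuine gap.

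The gap is closable with tools you already set up; the fix is to apply $\Psi$ \emph{inside each vertex-deleted graph} $G-u$ rather than to $G$ itself, which is what creates your off-by-one count. Say a graph $\Gamma$ satisfies $H(q)$ if $\Gamma-u\in\mathbf{W}_{q-1}$ and $\alpha(\Gamma-u)=\alpha(\Gamma)$ for all $u\in V(\Gamma)$; your hypothesis is $H(p)$ for $G$. Prove, by induction on $q$, the claim $R(q)$: \emph{if $\Gamma$ satisfies $H(q)$, then for any disjoint independent sets $A_1,\ldots,A_q$ there is a maximum independent set $S\supseteq A_q$ with $S\cap(A_1\cup\cdots\cup A_{q-1})=\emptyset$.} For $q=2$, $H(2)$ gives $\Gamma\in\mathbf{W}_2$ by your base case, and applying the $\mathbf{W}_2$ property to $(A_1,A_2)$ and keeping the second output set proves $R(2)$. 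For $q\ge3$: if $A_1,\ldots,A_{q-1}$ are all empty, well-coveredness suffices; otherwise relabel so $A_1\neq\emptyset$, and note that for every $u\in V(\Gamma-A_1)$, applying $\Psi$ to $\Gamma-u\in\mathbf{W}_{q-1}$ with the independent set $A_1$ yields $(\Gamma-A_1)-u=(\Gamma-u)-A_1\in\mathbf{W}_{q-2}$ with independence number $\alpha(\Gamma)$; monotonicity then forces $\alpha(\Gamma-A_1)=\alpha(\Gamma)$, so $\Gamma-A_1$ satisfies $H(q-1)$. Now $R(q-1)$ applied to $\Gamma-A_1$ with the sets $A_2,\ldots,A_q$ produces $S$, which has size $\alpha(\Gamma-A_1)=\alpha(\Gamma)$, hence is maximum in $\Gamma$, contains $A_q$, and misses all of $A_1\cup\cdots\cup A_{q-1}$. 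Deleting a whole prescribed set at once --- rather than one vertex at a time --- drops the level and the number of sets to avoid in lockstep, and the bottom level $\mathbf{W}_2$ handles the last two sets simultaneously, which is precisely the ``simultaneous treatment of the final two sets'' you guessed would be needed. With $R(p)$ proved, your Reserve-plus-Reduce architecture goes through and the induction closes.
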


Furthermore, in \cite[Constructions 1–4]{Staples}, Staples presented several constructions of infinite families of $\mathbf{W}_p$ graphs that admit independent sets of arbitrarily large cardinality. The following property follows directly from the definition and will be used repeatedly throughout this paper.

\begin{lemma} {\rm (\cite[Theorems 3 and 4]{Staples})}
\label{lem_key} Let $p \geq 2$, and suppose that $G$ is in  $\mathbf{W}_{p}$
class. Then the following properties hold:

\begin{enumerate}
\item[(a)]  $n(G)\geq p\cdot \alpha (G)$. In particular,
equality holds, i.e., $n(G)=p\cdot \alpha (G)$, if and only if $G$ is the
disjoint union of $\alpha (G)$ complete graphs, each on $p$ vertices.

\item[(b)]   if $G$ is connected and non-complete, then
every vertex in $G$ has degree at least $p$.
\end{enumerate}
\end{lemma}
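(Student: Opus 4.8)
The plan is to prove Lemma \ref{lem_key} by leveraging the powerful reformulation given in Theorem \ref{lem_equiv}(b), which reduces membership in $\mathbf{W}_p$ to a well-coveredness condition on all vertex-deletions $G-A$ with $|A|=p-1$. For part (a), the lower bound $n(G)\ge p\cdot\alpha(G)$ should follow from the defining property itself: since $G\in\mathbf{W}_p$, I can take any fixed maximum independent set $S$ of size $\alpha(G)$ and, viewing its $\alpha(G)$ singletons appropriately, extract $p$ pairwise disjoint maximum independent sets $S_1,\ldots,S_p$. More directly, I would start from the empty independent sets $A_1=\cdots=A_p=\emptyset$, which are trivially pairwise disjoint, to obtain $p$ pairwise disjoint maximum independent sets $S_1,\ldots,S_p$, each of size $\alpha(G)$; their disjoint union lies in $V(G)$, giving $n(G)\ge\sum_{i=1}^p|S_i|=p\cdot\alpha(G)$.

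For the equality case, the forward direction is the substantive part. Assuming $n(G)=p\cdot\alpha(G)$, the $p$ disjoint maximum independent sets $S_1,\ldots,S_p$ from above must \emph{partition} $V(G)$ exactly, with no vertex left over. The plan is to show this forces $G$ to decompose as claimed. I would argue that each vertex $v$ lies in exactly one $S_i$, and then analyze the structure by repeatedly invoking the inductive characterization in Theorem \ref{lem_equiv}(a): $G\in\mathbf{W}_p$ implies $G-v\in\mathbf{W}_{p-1}$ with $\alpha(G-v)=\alpha(G)$. The key observation should be that tightness of the counting bound propagates: if $n(G)=p\cdot\alpha(G)$ then the deletion cannot create slack, so one can peel off vertices and apply induction on $p$, with the base case $p=1$ (well-covered with $n=\alpha$, i.e. every vertex isolated, trivially a union of $K_1$'s) or a direct combinatorial argument. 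The cleanest route may be to show directly that every vertex of $G$ has degree exactly $p-1$ and that its closed neighborhood is a clique: since the $S_i$ partition $V(G)$ and each is a maximum independent set, a counting/pigeonhole argument on the partition classes should force each vertex to be adjacent to exactly one representative from each of the other $p-1$ classes, while being non-adjacent within its own class, yielding $\alpha(G)$ disjoint copies of $K_p$.

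For part (b), assume $G$ is connected and non-complete and let $v$ be any vertex; I must show $\deg_G(v)\ge p$. The natural tool is again Theorem \ref{lem_equiv}(b): for any $A\subseteq V(G)$ with $|A|=p-1$, the graph $G-A$ is well-covered with $\alpha(G-A)=\alpha(G)$. The strategy is to suppose for contradiction that $\deg_G(v)\le p-1$, and choose $A$ to be a set of $p-1$ of $v$'s neighbors (using connectivity and non-completeness to ensure $v$ has at least one neighbor and that deletions behave nontrivially). If $\deg_G(v)=p-1$ exactly, then $A=N_G(v)$ has size $p-1$, and in $G-A$ the vertex $v$ becomes isolated; an isolated vertex in a well-covered graph would need $\alpha(G-A)=1+\alpha((G-A)-v)$, and I would derive a contradiction with $\alpha(G-A)=\alpha(G)$ together with connectivity and non-completeness forcing $\alpha(G)\ge 2$ via Lemma \ref{lem_key}(a) or Lemma \ref{CP88a}. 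If $\deg_G(v)<p-1$, one pads $A$ with additional vertices or reduces to the previous case.

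The main obstacle will be the equality characterization in part (a): establishing that a tight count genuinely forces the rigid decomposition into disjoint $K_p$'s, rather than merely bounding degrees, requires care to rule out configurations where the partition classes are "linked" in more complex ways than a clean clique decomposition. I expect the crux to be showing that no edges exist \emph{within} any $S_i$ (automatic, since each $S_i$ is independent) combined with showing that the bipartite adjacency between any two classes $S_i,S_j$ is a perfect matching; establishing the latter uniformly across all pairs, and checking consistency so that closed neighborhoods actually close up into cliques, is where the delicate combinatorial bookkeeping lies. The reverse direction of the equality, that a disjoint union of $\alpha(G)$ copies of $K_p$ is in $\mathbf{W}_p$ with $n=p\cdot\alpha(G)$, should be a routine verification directly from the definition.
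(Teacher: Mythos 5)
You should first note that the paper itself does not prove this lemma at all: it is quoted verbatim from Staples (\cite[Theorems 3 and 4]{Staples}), so your attempt can only be judged against what a complete argument requires. Your lower bound is fine: taking $A_1=\cdots=A_p=\emptyset$ in the definition produces $p$ pairwise disjoint maximum independent sets, whence $n(G)\ge p\cdot\alpha(G)$. The equality case, however, has a genuine gap. From the resulting partition $V(G)=S_1\cup\cdots\cup S_p$, maximality of each $S_j$ gives every vertex \emph{at least} one neighbour in each other class, but ``exactly one'' does not follow from any counting or pigeonhole on the partition, and in fact cannot: the graph obtained from $K_p\cup K_p$ by adding a single edge between the two cliques has $\alpha=2$, $n=p\alpha$, and admits a partition into $p$ disjoint maximum independent sets, yet is not a disjoint union of $K_p$'s. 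So any argument using only the partition (rather than re-invoking the $\mathbf{W}_p$ hypothesis) must fail. The missing idea is a degree bound obtained by applying the inequality you just proved to the \emph{localization}: by Lemma \ref{key2}(a) and Lemma \ref{CP88b}, $G_v\in\mathbf{W}_p$ with $\alpha(G_v)=\alpha(G)-1$, so $n(G)-1-\deg_G(v)=n(G_v)\ge p\bigl(\alpha(G)-1\bigr)$, which under tightness forces $\deg_G(v)\le p-1$, hence $\deg_G(v)=p-1$ exactly. Your alternative plan, induction on $p$ by peeling vertices via Theorem \ref{lem_equiv}(a), genuinely fails because tightness does not propagate: $n(G-v)=p\alpha(G)-1$ while $(p-1)\alpha(G-v)=p\alpha(G)-\alpha(G)$, leaving slack $\alpha(G)-1>0$. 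The clean finish is induction on $\alpha(G)$ through $G_v$, which \emph{is} again tight: all its vertices have degree $p-1$ in $G_v$ and in $G$, so no edges join $N_G(v)$ to $V(G_v)$, hence $G[N_G[v]]$ is a component of order $p$ lying in $\mathbf{W}_p$ by Lemma \ref{disconnected}, i.e.\ a $K_p$.

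Part (b) has the same kind of gap, and here it is fatal to the sketch as written: the facts you reach --- $v$ isolated in $G-A$, $G-A$ well-covered, $\alpha(G-A)=\alpha(G)\ge 2$ --- are mutually consistent (a well-covered graph with an isolated vertex and independence number $2$ is perfectly possible, e.g.\ $K_1\cup K_2$), so no contradiction follows at the point where you claim one. A correct continuation needs a further application of Theorem \ref{lem_equiv}(b) to modified deletion sets: for each $u\in A$ put $A''=(A\cup\{v\})-\{u\}$, which again has size $p-1$. Since $v$ isolated in the well-covered graph $G-A$ forces $\alpha\bigl(G-(A\cup\{v\})\bigr)=\alpha(G)-1$, every maximal independent set of the well-covered graph $G-A''$ must contain $u$, and therefore $u$ is isolated in $G-A''$, i.e.\ $N_G[u]\subseteq A\cup\{v\}$. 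As this holds for every $u\in A$, and $N_G[v]\subseteq A\cup\{v\}$ by construction, the set $A\cup\{v\}$ is a union of connected components of $G$; connectivity then gives $n(G)=p$, contradicting $n(G)\ge p\cdot\alpha(G)\ge 2p$ from part (a). Without some such additional exploitation of the $\mathbf{W}_p$ property, your argument stalls exactly where the real work begins.
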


\begin{theorem}  {\rm (\cite[Theorem 2.4]{DLMP})}  \label{th1} Let $G$ be   a graph  without isolated vertices in    $\mathbf{W}_{p}$ class,  
	and $A$ be a non-maximum independent set in $G$.
	Then the following assertions are true.
	\begin{enumerate}
		\item[(a)] There are at least $p$ pairwise disjoint independent sets $%
		B_{1},B_{2},\ldots,B_{p}$ such that $A\cup B_{i}$ is maximum independent set of $G$ 
		and $A\cap B_{i}=\emptyset $ for each $1\leq i\leq p$.
		\item[(b)] If $p\geq 2$, then there are at least $p-1$ pairwise disjoint maximum
		independent sets $S_{1},S_{2},\ldots,S_{p-1}$ such that $A\cap S_{i}=\emptyset $
		for each $1\leq i\leq p-1$.
	\end{enumerate}
\end{theorem}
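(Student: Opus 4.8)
The plan is to handle the two parts by quite different mechanisms: part (b) follows from a single direct application of the definition of $\mathbf{W}_p$, while part (a) reduces to showing that the localization $G_A$ is itself in $\mathbf{W}_p$.

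For part (b), since $p\ge 2$ I would feed the definition of $\mathbf{W}_p$ the collection $A_1=A$ and $A_2=\cdots=A_p=\emptyset$; these are pairwise disjoint independent sets. The definition then supplies pairwise disjoint maximum independent sets $S_1,\ldots,S_p$ with $A\subseteq S_1$. Because each of $S_2,\ldots,S_p$ is disjoint from $S_1\supseteq A$, these $p-1$ sets are pairwise disjoint maximum independent sets, each disjoint from $A$, which is exactly the assertion. This part is essentially immediate.

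For part (a), I would first record the correspondence that a maximum independent set of $G$ containing $A$ is exactly a set $A\cup B$ with $B$ a maximum independent set of $G_A=G-N_G[A]$: any $B\subseteq V(G_A)$ forces $A\cup B$ to be independent, and since $G\in\mathbf{W}_1$ is well-covered, Lemma \ref{CP88b} gives $\alpha(G_A)=\alpha(G)-|A|$, so $|A\cup B|=\alpha(G)$ precisely when $|B|=\alpha(G_A)$. Hence part (a) is equivalent to producing $p$ pairwise disjoint maximum independent sets in $G_A$, and for this it suffices to establish the key claim $G_A\in\mathbf{W}_p$: applying the definition of $\mathbf{W}_p$ to $G_A$ with the $p$ empty sets then yields the required pairwise disjoint $B_1,\ldots,B_p$, and $A\cup B_i$ are the desired maximum independent sets (the trivial case $A=\emptyset$ being covered by $G_A=G$).

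The heart of the argument, and the step I expect to be the main obstacle, is therefore the stability of $\mathbf{W}_p$ under localization: if $H\in\mathbf{W}_p$ and $v\in V(H)$ with $\alpha(H)\ge 2$, then $H_v\in\mathbf{W}_p$. I would prove this via Theorem \ref{lem_equiv}(b). Given $B\subseteq V(H_v)$ with $|B|=p-1$, the disjointness $B\cap N_H[v]=\emptyset$ yields the identity $H_v-B=(H-B)_v$; now $H\in\mathbf{W}_p$ together with Theorem \ref{lem_equiv}(b) shows $H-B$ is well-covered with $\alpha(H-B)=\alpha(H)$, whereupon Lemma \ref{CP88a} gives that $(H-B)_v$ is well-covered with independence number $\alpha(H)-1=\alpha(H_v)$. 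Since $B$ was arbitrary, Theorem \ref{lem_equiv}(b) delivers $H_v\in\mathbf{W}_p$, the case $p=1$ being exactly Lemma \ref{CP88a}. Finally I would iterate this one vertex at a time over $A$, which is legitimate because $A$ is independent (so each remaining vertex of $A$ survives the earlier localizations) and because $|A|<\alpha(G)$ keeps the independence number at least $2$ before the last step; Lemma \ref{commutativity} identifies the iterated localizations with $G_A$, giving $G_A\in\mathbf{W}_p$. The delicate points to monitor are the bookkeeping that $\alpha$ stays $\ge 2$ throughout the iteration and the confirmation that $n(G_A)\ge p$, which is guaranteed by Lemma \ref{lem_key}(a).
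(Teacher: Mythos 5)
First, note that the paper never proves this theorem: it is imported verbatim from \cite[Theorem 2.4]{DLMP}, so there is no internal proof to compare against, and your argument has to stand on its own. Most of it does. Part (b) is correct: padding the collection with $p-1$ copies of the empty set is legitimate under the paper's definition of $\mathbf{W}_p$ (this is exactly how Lemma \ref{lem_key}(a) is obtained), and discarding $S_1\supseteq A$ leaves the required $p-1$ disjoint maximum independent sets avoiding $A$. The reduction in part (a) is also correct: by Lemma \ref{CP88b}, the maximum independent sets of $G$ containing $A$ are precisely the sets $A\cup B$ with $B$ a maximum independent set of $G_A$, so everything hinges on $G_A\in\mathbf{W}_p$, after which the definition applied to $G_A$ with $p$ empty sets finishes the proof.

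The one genuine flaw is in your treatment of that hinge. You did not need to re-prove it at all: the claim $G_A\in\mathbf{W}_p$ for an independent set $A$ with $|A|<\alpha(G)$ is literally Lemma \ref{key2}(b) of the paper, so a citation would make your proof complete. Your re-derivation, as written, has a circular step: to conclude $H_v\in\mathbf{W}_p$ from the criterion of Theorem \ref{lem_equiv}(b) one must also secure $n(H_v)\geq p$ (membership in $\mathbf{W}_p$ demands at least $p$ vertices, and the criterion is vacuously satisfied whenever $n(H_v)<p-1$), and you justify this cardinality bound by Lemma \ref{lem_key}(a) --- but that lemma presupposes membership in $\mathbf{W}_p$, which is exactly what you are trying to establish for $H_v$, resp.\ for $G_A$. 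The hole is repairable without Lemma \ref{key2}(b): for instance, show $n(H_v)\geq p-1$ by induction on $p$ via Theorem \ref{lem_equiv}(a) (for a neighbor $w$ of $v$ one has $(H-w)_v=H_v$ and $H-w\in\mathbf{W}_{p-1}$), and then note that your own verification gives that $H_v-B$ is well-covered with independence number $\alpha(H_v)\geq 1$, hence nonempty, which forces $n(H_v)\geq p$. But as submitted, the cardinality bookkeeping you yourself flag as delicate is resolved by a circular appeal, so this step does not stand.
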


The following lemma states that a graph $G$ in the class $\mathbf{W}_p$ is preserved under taking induced subgraphs on the complements of closed neighborhoods. 

\begin{lemma} {\rm (\cite[Lemma 2.7]{HLMP2024}) }
\label{key2} Let $G$ be a $\mathbf{W}_{p}$ graph. The following assertions
are true:

\begin{enumerate}
\item[(a)] if $\alpha (G)>1$, then $G_{x}\in 
\mathbf{W}_{p}$ for every $x\in V(G)$;

\item[(b)]  if $S$ is an independent set of $G$
such that $\left\vert S\right\vert <\alpha (G)$, then $G_{S}\in \mathbf{W}%
_{p}$. In particular, if $p > 1$, then $G_S$ has no isolated vertices.
\end{enumerate}
\end{lemma}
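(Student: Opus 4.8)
The plan is to establish part (a) first and then obtain part (b) by peeling the vertices of $S$ off one at a time. For (a) the most direct idea would be to take $p$ pairwise disjoint independent sets $A_{1},\dots,A_{p}$ of $G_{x}$, view them as independent sets of $G$, and invoke the $\mathbf{W}_{p}$-property of $G$ to extend them; this, however, does not work directly. A maximum independent set of $G_{x}$ corresponds to a maximum independent set of $G$ through $x$ (adjoining $x$ raises the size from $\alpha(G)-1$ to $\alpha(G)$), so the $p$ extensions would all be forced to contain the single vertex $x$, which is incompatible with their required disjointness. I would therefore bypass the definition and argue through the deletion characterization in Theorem \ref{lem_equiv}(b).

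The key structural observation is that for any $B\subseteq V(G_{x})=V(G)-N_{G}[x]$ one has $B\cap N_{G}[x]=\emptyset$, hence $N_{G-B}[x]=N_{G}[x]$, and consequently deletion of $B$ and localization at $x$ commute:
\[
G_{x}-B \;=\; G-(N_{G}[x]\cup B) \;=\; (G-B)_{x}.
\]
Assuming $p\ge 2$, I would verify $G_{x}\in\mathbf{W}_{p}$ via Theorem \ref{lem_equiv}(b). Fix $B\subseteq V(G_{x})$ with $|B|=p-1$. Since $G\in\mathbf{W}_{p}$, Theorem \ref{lem_equiv}(b) gives that $G-B$ is well-covered with $\alpha(G-B)=\alpha(G)>1$, and $x\in V(G-B)$ because $x\notin B$. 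Applying Lemma \ref{CP88a} to the well-covered graph $G-B$ at the vertex $x$ shows that $(G-B)_{x}$ is well-covered with $\alpha((G-B)_{x})=\alpha(G-B)-1=\alpha(G)-1$. By the displayed identity this graph is exactly $G_{x}-B$, and since $\alpha(G_{x})=\alpha(G)-1$ (Lemma \ref{CP88a} applied to $G$), we conclude that $G_{x}-B$ is well-covered with $\alpha(G_{x}-B)=\alpha(G_{x})$. As $B$ is an arbitrary $(p-1)$-subset of $V(G_{x})$, Theorem \ref{lem_equiv}(b) yields $G_{x}\in\mathbf{W}_{p}$. The case $p=1$ is precisely Lemma \ref{CP88a}. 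One bookkeeping point to settle before invoking Theorem \ref{lem_equiv}(b) is that $(p-1)$-subsets of $V(G_x)$ exist, i.e. $n(G_{x})\ge p$; here I would use Theorem \ref{th1}(a): as $G\in\mathbf{W}_{p}\subseteq\mathbf{W}_{2}$ has no isolated vertices and $\{x\}$ is non-maximum (because $\alpha(G)>1$), there are $p$ pairwise disjoint independent sets $B_{1},\dots,B_{p}$ with each $\{x\}\cup B_{i}$ maximum in $G$; each $B_{i}$ avoids $N_{G}[x]$ and has size $\alpha(G)-1\ge 1$, so the $B_{i}$ are $p$ disjoint nonempty subsets of $V(G_{x})$.

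For part (b) I would induct on $|S|$, the case $S=\emptyset$ being the hypothesis $G\in\mathbf{W}_{p}$. If $|S|\ge 1$, pick $s\in S$; from $1\le |S|<\alpha(G)$ we get $\alpha(G)>1$, so part (a) gives $G_{s}\in\mathbf{W}_{p}$. Writing $S'=S-\{s\}$, independence of $S$ makes $S'$ an independent set of $G_{s}$ with $|S'|=|S|-1<\alpha(G)-1=\alpha(G_{s})$, so the inductive hypothesis applied to $G_{s}$ yields $(G_{s})_{S'}\in\mathbf{W}_{p}$. Since $S$ is independent, the hypotheses of Lemma \ref{commutativity} hold for $\{s\}$ and $S'$, whence $(G_{s})_{S'}=G_{S}$, completing the induction. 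Finally, when $p>1$ we have $G_{S}\in\mathbf{W}_{p}\subseteq\mathbf{W}_{2}$, and graphs in $\mathbf{W}_{2}$ have no isolated vertices by definition, which gives the last assertion.

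I expect the only genuinely delicate points to be the commutation identity $G_{x}-B=(G-B)_{x}$, which is what allows the problem to be reduced to Lemma \ref{CP88a} on the deleted graph $G-B$, and the accompanying verification that $n(G_{x})\ge p$. Everything else is a routine combination of Lemma \ref{CP88a}, Theorem \ref{lem_equiv}(b), Theorem \ref{th1}, and Lemma \ref{commutativity}.
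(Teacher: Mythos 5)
Your proof is correct, but note that there is nothing in the paper to compare it against: the paper quotes this lemma from \cite[Lemma 2.7]{HLMP2024} without reproducing a proof, so your argument stands or falls on its own, and it stands. It is also genuinely self-contained relative to the results stated in this paper. For (a) you bypass the definition (correctly diagnosing why the naive extension argument fails: all $p$ extensions would be forced through the single vertex $x$) and instead reduce to Staples' deletion criterion, Theorem \ref{lem_equiv}(b), via the commutation identity $G_{x}-B=(G-B)_{x}$, which is valid precisely because $B\subseteq V(G_{x})$ forces $B\cap N_{G}[x]=\emptyset$; Lemma \ref{CP88a} applied to the well-covered graph $G-B$ (with $\alpha(G-B)=\alpha(G)>1$ checked) then gives exactly the condition needed. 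Your bookkeeping step $n(G_{x})\geq p$ via Theorem \ref{th1}(a) is not pedantry but necessary: the criterion of Theorem \ref{lem_equiv}(b) is vacuously satisfied by any graph on fewer than $p-1$ vertices, so membership in $\mathbf{W}_{p}$ cannot be concluded from it without a cardinality check. For (b), the induction peeling one vertex of $S$ at a time is sound: $\alpha(G)>1$ is available in the inductive step, $S'=S-\{s\}$ lies in $V(G_{s})$ and satisfies $|S'|<\alpha(G_{s})$, and Lemma \ref{commutativity} applies to the pair $\{s\},S'$ because $S$ is independent.

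Two minor attributions to tighten, neither affecting correctness: in (b) the equality $\alpha(G_{s})=\alpha(G)-1$ is used silently and should be credited to Lemma \ref{CP88a} (using $G\in\mathbf{W}_{p}\subseteq\mathbf{W}_{1}$, i.e., $G$ well-covered); and the final claim that a $\mathbf{W}_{2}$ graph has no isolated vertices is not literally ``by definition'' --- it follows from the characterization of $\mathbf{W}_{2}$ as $1$-well-covered graphs without isolated vertices quoted in the introduction, or more cleanly from Lemma \ref{disconnected}, since an isolated vertex would yield a $K_{1}$ component, which cannot lie in $\mathbf{W}_{p}$ for $p\geq 2$.
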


\section{Characterizing $\mathbf{W}_{p}$ graphs}
\label{sec2}  
 
 For $p \ge 1$, every graph belonging to the  $\mathbf{W}_{p}$ class necessarily contains at least $p$ vertices. Moreover, by Lemma \ref{lem_key}~(b), such a graph has no isolated vertices whenever $p \ge 2$. In addition, each connected component of a $\mathbf{W}_{p}$ graph is itself a member of $\mathbf{W}_{p}$, as formalized in the following lemma: 
\begin{lemma}
 {\rm(\cite[Theorem 2.6]{HLMP2024})} \label{disconnected} A graph is in $%
\mathbf{W}_{p}$  if and only if each of its connected components is also $%
\mathbf{W}_{p}$.
\end{lemma}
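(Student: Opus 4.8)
The plan is to prove both directions of the equivalence by working component-wise. Let $G$ have connected components $G_1, \ldots, G_k$, so that $V(G)$ is partitioned into the vertex sets of the $G_i$, and crucially every independent set $A$ of $G$ decomposes uniquely as a disjoint union $A = \bigcup_i (A \cap V(G_i))$ where each $A \cap V(G_i)$ is independent in $G_i$. Since there are no edges between distinct components, we also have $\alpha(G) = \sum_i \alpha(G_i)$, and a set is a maximum independent set of $G$ if and only if its restriction to each $G_i$ is a maximum independent set of $G_i$.

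For the backward direction (each component in $\mathbf{W}_p$ implies $G \in \mathbf{W}_p$), I would start from $p$ pairwise disjoint independent sets $A_1, \ldots, A_p$ of $G$. First I would check the cardinality hypothesis: since each $G_i$ is in $\mathbf{W}_p$ we have $n(G_i) \ge p$, hence $n(G) \ge p$. Then, for each fixed component $G_i$, the restrictions $A_1 \cap V(G_i), \ldots, A_p \cap V(G_i)$ form $p$ pairwise disjoint independent sets of $G_i$, so by the $\mathbf{W}_p$ property of $G_i$ there exist $p$ pairwise disjoint maximum independent sets $S_1^{(i)}, \ldots, S_p^{(i)}$ of $G_i$ with $A_j \cap V(G_i) \subseteq S_j^{(i)}$. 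Defining $S_j = \bigcup_i S_j^{(i)}$ then gives $p$ pairwise disjoint maximum independent sets of $G$ (maximum because they are maximum on each component, pairwise disjoint because disjointness holds component-wise) with $A_j \subseteq S_j$, which is exactly what is required.

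For the forward direction (if $G \in \mathbf{W}_p$ then each $G_i \in \mathbf{W}_p$), fix a component, say $G_1$, and take $p$ pairwise disjoint independent sets $A_1, \ldots, A_p$ of $G_1$. These are automatically $p$ pairwise disjoint independent sets of $G$, so the $\mathbf{W}_p$ property of $G$ yields pairwise disjoint maximum independent sets $S_1, \ldots, S_p$ of $G$ with $A_j \subseteq S_j$. Restricting to $G_1$, the sets $S_j \cap V(G_1)$ are pairwise disjoint, each is a maximum independent set of $G_1$ (since $S_j$ is maximum in $G$), and $A_j = A_j \cap V(G_1) \subseteq S_j \cap V(G_1)$. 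To conclude $G_1 \in \mathbf{W}_p$ I also need $n(G_1) \ge p$; this follows because $G_1$ admits $p$ pairwise disjoint nonempty independent sets only if it is large enough, but more cleanly one observes that the restrictions $S_1 \cap V(G_1), \ldots, S_p \cap V(G_1)$ are $p$ pairwise disjoint nonempty sets (each contains the nonempty $A_j$, or if $\alpha(G_1)\ge 1$ each is a maximum independent set hence nonempty), forcing $n(G_1) \ge p$.

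I do not anticipate a genuine obstacle here; the argument is essentially a bookkeeping exercise exploiting that independence, disjointness, and maximality all factor through the connected components. The one subtle point worth stating carefully is the cardinality condition $n(G_i) \ge p$ in the definition of $\mathbf{W}_p$: in the forward direction I must verify it for each component rather than take it for granted, and the cleanest route is to note that a maximum independent set of any non-edgeless component is nonempty, so the $p$ disjoint maximum independent sets produced above certify $n(G_1) \ge p$ (edgeless components are single vertices and are handled trivially, since an isolated vertex fails $\mathbf{W}_p$ for $p \ge 2$ and lies in $\mathbf{W}_1$ precisely when it is a valid well-covered graph). Since the decomposition of independent sets across components is both functorial and reversible, combining the two directions completes the proof.
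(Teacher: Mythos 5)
Your proof is correct. Note that the paper does not prove this lemma at all --- it is quoted from an external source (\cite[Theorem 2.6]{HLMP2024}) --- so there is no internal argument to compare against; your component-wise bookkeeping (restrict, extend within each component, reassemble, and check the cardinality condition $n(G_i)\geq p$ via the $p$ disjoint nonempty restrictions of maximum independent sets) is the natural and standard argument, and it handles the one genuinely delicate point, namely that maximality and the size condition both factor through components, exactly as needed.
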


\begin{theorem}
\label{mth1} Let $p\geq 1$ and $G$ be a graph with $\alpha (G)\geq 2$. Then $%
G\in \mathbf{W}_{p}$ if and only if $G_{x}\in \mathbf{W}_{p}$ and $\alpha
(G_{x})=\alpha (G)-1$ for every $x\in V(G)$.
\end{theorem}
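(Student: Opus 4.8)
The plan is to establish the two implications separately. The forward implication is essentially immediate from results already available, so the real work lies in the converse, which I would prove by induction on $p$, using Theorem~\ref{lem_equiv}(a) as the main engine and Lemma~\ref{CP88a} as the base case $p=1$.

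For the forward direction, assume $G \in \mathbf{W}_p$. Since $\alpha(G) \ge 2 > 1$, Lemma~\ref{key2}(a) gives $G_x \in \mathbf{W}_p$ for every $x \in V(G)$. Moreover $\mathbf{W}_p \subseteq \mathbf{W}_1$, so $G$ is well-covered, and applying Lemma~\ref{CP88b} to the singleton independent set $\{x\}$ (legitimate because $1 < \alpha(G)$) yields $\alpha(G_x) = \alpha(G)-1$. This argument needs no induction and covers all $p \ge 1$.

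For the converse, suppose $G_x \in \mathbf{W}_p$ and $\alpha(G_x) = \alpha(G)-1$ for every $x$. Each $G_x$ is then well-covered, so Lemma~\ref{CP88a} shows $G$ is well-covered; for $p=1$ this is exactly the claim, giving the base case. Assume now $p \ge 2$ and that the theorem holds at $p-1$. I would first exclude isolated vertices: if $v$ were isolated, then for any $x \ne v$ the vertex $v$ would survive in $G_x$ and remain isolated there, contradicting that $G_x \in \mathbf{W}_p \subseteq \mathbf{W}_2$ is free of isolated vertices. Hence every vertex is non-isolated and, $G$ being well-covered, $\alpha(G-v) = \alpha(G)$ for all $v$. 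By Theorem~\ref{lem_equiv}(a) it now suffices to show $G - v \in \mathbf{W}_{p-1}$ for each $v$; since $\alpha(G-v) = \alpha(G) \ge 2$, I would obtain this by applying the inductive hypothesis to the graph $G-v$.

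The crux is to verify the localization hypotheses for $G-v$, which rests on the identity (a short computation with closed neighborhoods): for $x \ne v$, one has $(G-v)_x = G_x$ when $v \in N_G(x)$, whereas $(G-v)_x = G_x - v$ when $v \notin N_G(x)$. In the first case $(G-v)_x = G_x \in \mathbf{W}_p \subseteq \mathbf{W}_{p-1}$ and $\alpha((G-v)_x) = \alpha(G)-1 = \alpha(G-v)-1$. In the second case $v \in V(G_x)$, and since $G_x \in \mathbf{W}_p$, the forward part of Theorem~\ref{lem_equiv}(a) applied to $G_x$ gives $G_x - v \in \mathbf{W}_{p-1}$ with $\alpha(G_x - v) = \alpha(G_x) = \alpha(G-v)-1$. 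Thus in both cases $(G-v)_x \in \mathbf{W}_{p-1}$ and $\alpha((G-v)_x) = \alpha(G-v)-1$, so the inductive hypothesis yields $G-v \in \mathbf{W}_{p-1}$, and Theorem~\ref{lem_equiv}(a) then gives $G \in \mathbf{W}_p$. I expect the main obstacle to be exactly this interplay between the vertex deletion $G-v$ and the localization $G_x$: recognizing that $(G-v)_x$ collapses to $G_x$ or $G_x - v$ according to adjacency, and matching each case with the correct prior fact (the chain inclusion in one case, Theorem~\ref{lem_equiv}(a) for $G_x$ in the other), while keeping the independence-number bookkeeping and the isolated-vertex exclusion straight.
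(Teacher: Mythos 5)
Your proof is correct, and its core mechanism coincides with the paper's: the necessity direction uses Lemma~\ref{key2} together with well-coveredness (Lemma~\ref{CP88b}/Lemma~\ref{CP88a}), and the sufficiency direction reduces, via Theorem~\ref{lem_equiv}(a), to showing $G-v\in \mathbf{W}_{p-1}$, which both you and the paper verify through the same adjacency dichotomy $(G-v)_{x}=G_{x}$ when $xv\in E(G)$ and $(G-v)_{x}=G_{x}-v$ otherwise, applying Theorem~\ref{lem_equiv}(a) to $G_{x}$ in the latter case. The genuine difference is the induction scheme. The paper inducts on $n(G)+p$; as a consequence it must treat the extremal base case $n(G)+p=p\cdot \alpha(G)+2$ by a separate structural argument (Lemma~\ref{lem_key}(a), the forced $K_{2}$ component, the conclusion $p\le 2$), and it imports the cases $p=1$ and $p=2$ from the literature (\cite{HT16}, \cite{Pinter}, \cite{LM19}). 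You induct on $p$ alone, with base case $p=1$ given directly by Lemma~\ref{CP88a}; since the inductive step only ever passes from the pair $(G,p)$ to $(G-v,p-1)$, this is all that is needed, and the $n(G)$ component of the paper's measure, its extremal base-case analysis, and the external citation for $p=2$ all become unnecessary -- your argument re-proves the known $\mathbf{W}_{2}$ characterization rather than citing it. The bookkeeping you retain -- excluding isolated vertices via $\mathbf{W}_{p}\subseteq \mathbf{W}_{2}$ so that $\alpha(G-v)=\alpha(G)$, and checking $\alpha(G-v)\ge 2$ before invoking the inductive hypothesis -- is exactly what is required, so there is no gap; your route is self-contained and strictly simpler.
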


\begin{proof}
For $p = 1$, the necessary condition of this theorem was established in
Lemma \ref{CP88a}, and the sufficient condition was also proved in \cite[%
Lemma 4.1]{HT16}. For $p = 2$, the necessary condition is shown in \cite[%
Theorem 5]{Pinter}, while the sufficient condition is proved in \cite[%
Theorem 3.9]{LM19}. Now we assume that $p\ge 2$.

\vskip0.5em \noindent ($\Longrightarrow $) Follows from Lemma \ref{key2}(b)
and Lemma \ref{CP88a}.

\vskip0.5em \noindent ($\Longleftarrow $) By Theorem \ref{lem_equiv}, we
need to prove that $G-v\in \mathbf{W}_{p-1}$ and $\alpha (G-v)=\alpha (G)$
for all $v\in V(G)$. Since $\mathbf{W}_{p}\subseteq \mathbf{W}_{1}$, 
by the assumption, $G_x$ is well-covered and $\alpha(G_x)=\alpha(G)-1$ for all $x\in V(G)$. Applying Lemma \ref{CP88a}, $G$ is a well-covered graph. Moreover, by
the definition of $\mathbf{W}_p$ graphs, $n(G_{x}) \geq p$.

\vskip0.5em \noindent \emph{Claim 1.} $G$ has no isolated vertices.

\vskip0.5em Suppose that $G$ has an isolated vertex, say $v$. Since $\alpha
(G)\geq 2$, there exists a vertex $x\in V(G)$ such that $\{x,v\}$ is an
independent set in $G$. It follows that $v$ is also an isolated vertex in
the graph $G_{x}$. Now, if $\alpha (G_{x})=1$, then $G_{x}$ must be a
complete graph. But since $v$ is an isolated vertex in $G_{x}$, the only
possibility is that $G_{x}$ consists of the vertex $v$, contradicting the
assumption that $n(G_{x})\geq 2$. Therefore, we must have $\alpha (G_{x})>1$%
. But this contradicts Lemma~\ref{key2}~(b), since $G_{x}\in \mathbf{W}_{p}$.

\vskip0.5em \noindent \emph{Claim 2.} $\alpha (G-v)=\alpha (G)$ for every $%
v\in V(G)$.

\vskip0.5em By \textit{Claim 1}, the vertex $v$ must be adjacent to some
vertex $w$ in $G$. Let $S$ be a maximal independent set in $G$ that contains 
$w$. Since $G$ is well-covered, we have $\left\vert S\right\vert =\alpha (G)$. Furthermore, since $S$ is entirely contained in $V(G-v)$, it follows that $%
\alpha (G-v)=\alpha (G)$.

\vskip0.5em \noindent \emph{Claim 3.} $G-v$ is in $\mathbf{W}_{p-1}$.

\vskip0.5em We prove this claim by induction on $n(G)+p$. By Lemma \ref%
{lem_key}~(a), $n(G_{x})\geq p\cdot \alpha (G_{x})$. Equivalently, $%
n(G)-\left\vert N_{G}[x]\right\vert \geq p\cdot\alpha(G) -p$. Thus, by \emph{Claim 1}, 
\begin{equation*}
n(G)+p \geq p\cdot\alpha(G) + \left\vert N_{G}[x]\right\vert \geq p\cdot
\alpha(G) +2.
\end{equation*}

If $n(G)+p=p\cdot \alpha (G)+2$, then $\left\vert N_{G}[x]\right\vert =2$, so $%
\deg _{G}(x)=1$. Let $y$ be the unique neighbor of $x$ in $G$. By the same
reasoning, $\left\vert N_{G}[y]\right\vert =2$, which implies that the edge $%
xy$ defines a connected component in $G$. Now, consider any vertex $z\in
V(G_{x})$. Then the graph $G_{z}$ contains a connected component isomorphic
to $K_{2}$, namely the edge $xy$. By assumption, $G_{z}\in \mathbf{W}_{p}$,
and hence, by Lemma \ref{disconnected}, $K_{2}\in \mathbf{W}_{p}$, which
forces $p\leq 2$.  By \cite[%
Theorem 3.9]{LM19},  $G\in \mathbf{W}_{p}$. Consequently, $%
G-v\in \mathbf{W}_{p-1}$ by Theorem \ref{lem_equiv}~(a). 

We now assume that $n(G)+p>p\cdot \alpha(G)+2$. For every $x\in V(G-v)$, we
claim that $(G-v)_{x}\in \mathbf{W}_{p-1}$ and $\alpha ((G-v)_{x})=\alpha
(G-v)-1$. To prove this, we divide the argument into the following two cases.

\vskip0.5em \noindent \emph{Case 1.} $x$ is not adjacent to $v$ in $G$.

\vskip0.5em In this case, we have 
\begin{equation*}
(G-v)_{x}=(G-v)-N_{G}[x]=(G-N_{G}[x])-v=G_{x}-v.
\end{equation*}%
By the assumption, $G_{x}\in \mathbf{W}_{p}$. Applying Theorem \ref%
{lem_equiv}~(a), we conclude that $G_{x}-v$ belongs to $\mathbf{W}_{p-1}$ and
satisfies $\alpha (G_{x}-v)=\alpha (G_{x})$. Moreover, together with \emph{%
Claim 2}, we obtain 
\begin{equation*}
\alpha ((G-v)_{x})=\alpha (G_{x}-v)=\alpha (G_{x})=\alpha (G)-1=\alpha
(G-v)-1.
\end{equation*}%
\vskip0.5em \noindent \emph{Case 2.} $x$ is adjacent to $v$ in $G$.

\vskip0.5em In this case, we have 
\begin{equation*}
(G-v)_{x}=(G-v)-N_{G}[x]=G-N_{G}[x]=G_{x}.
\end{equation*}%
By the asumption, we obtain that $(G-v)_{x}\in \mathbf{W}_{p}\subseteq 
\mathbf{W}_{p-1}$ and by \emph{Claim 2} again, 
\begin{equation*}
\alpha ((G-v)_{x})=\alpha (G_{x})=\alpha (G)-1=\alpha (G-v)-1.
\end{equation*}%
\vskip0.5em From \emph{Case 1} and \emph{Case 2}, we obtain that $%
(G-v)_{x}\in \mathbf{W}_{p-1}$ and $\alpha ((G-v)_{x})=\alpha (G-v)-1$ for
every $x\in V(G-v)$. Since $n(G-v)+(p-1)<n(G)+p$, by the induction
hypothesis, it follows that $G-v$ belongs to $\mathbf{W}_{p-1}$, as claimed.
\end{proof}

\begin{theorem} \label{mainthm}
Let $p\ge 1$ and $G\in \mathbf{W}_{p}$. For a non-isolated vertex $v$ of $G$%
, the following conditions are equivalent:

\begin{enumerate}
\item[(a)]  $G-v$ is in $\mathbf{W}_{p}$;

\item[(b)]  $\left\vert N_{G}(v)-N_{G}(S)\right\vert \geq p$ for every
independent set $S$ of $G_{v}$;

\item[(c)]  there is no independent set $S\in V(G_{v})$ such that $\left\vert
N_{G_{S}}(v)\right\vert \leq p-1$.
 
 \end{enumerate}
\end{theorem}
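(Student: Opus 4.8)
My plan is to first dispose of the equivalence (b) $\Leftrightarrow$ (c), which is purely computational, and then prove (a) $\Leftrightarrow$ (b), handling $p=1$ by citing Theorem \ref{thm_G-v} and concentrating on $p\ge 2$. The crucial preliminary observation is that for any independent set $S$ of $G_v$ one has $N_{G_S}(v)=N_G(v)-N_G(S)$: since $S$ avoids $N_G[v]$, the vertex $v$ survives in $G_S=G-N_G[S]$ and is adjacent there precisely to the neighbors of $v$ not absorbed into $N_G[S]$. This identity makes (b) and (c) literally the same assertion. The same bookkeeping shows that $S\cup\{v\}$ is independent, so $|S|<\alpha(G)$, which is what allows $S$ to be fed into Lemma \ref{key2}(b) throughout.

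For (a) $\Rightarrow$ (b) I would argue directly, without induction. Fix an independent set $S$ of $G_v$. Applying Lemma \ref{key2}(b) to $G$ gives $G_S\in\mathbf{W}_p$ with no isolated vertices, and applying it to $G-v$ (legitimate since $|S|<\alpha(G)=\alpha(G-v)$, the latter equality holding because $v$ is non-isolated in the well-covered graph $G$) gives $(G-v)_S\in\mathbf{W}_p$. A short computation yields $(G-v)_S=G_S-v$, so $G_S-v\in\mathbf{W}_p$. Now let $H$ be the connected component of $v$ in $G_S$; by Lemma \ref{disconnected} both $H$ and $H-v$ lie in $\mathbf{W}_p$. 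If $H$ is non-complete, Lemma \ref{lem_key}(b) forces $\deg_{G_S}(v)=\deg_H(v)\ge p$; if $H=K_m$, then $H-v=K_{m-1}\in\mathbf{W}_p$ forces $m-1\ge p$ (every $\mathbf{W}_p$ graph has at least $p$ vertices), again giving $\deg_{G_S}(v)=m-1\ge p$. In either case $|N_G(v)-N_G(S)|=\deg_{G_S}(v)\ge p$.

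For (b) $\Rightarrow$ (a) I would induct on $n(G)$. If $\alpha(G)=1$, then $G$ is complete and (b) merely says $\deg_G(v)\ge p$, so $G-v$ is a complete graph on at least $p$ vertices and lies in $\mathbf{W}_p$. If $\alpha(G)\ge 2$, I verify the hypotheses of Theorem \ref{mth1} for $G-v$, namely that $(G-v)_x\in\mathbf{W}_p$ with $\alpha((G-v)_x)=\alpha(G-v)-1$ for every $x\in V(G-v)$. When $xv\in E(G)$ one has $(G-v)_x=G_x$, which is settled by Lemma \ref{key2}(a) together with Lemma \ref{CP88a}, with no appeal to (b). When $xv\notin E(G)$ one has $(G-v)_x=G_x-v$, and this is where the induction enters: $G_x\in\mathbf{W}_p$ has fewer vertices, $v$ is non-isolated in it, and I must check that condition (b) is inherited by the pair $(G_x,v)$ so that the inductive hypothesis yields $G_x-v\in\mathbf{W}_p$.

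Verifying this inheritance is the main obstacle. Using Lemma \ref{commutativity} to write $(G_x)_v=G_{\{x,v\}}=(G_v)_x$, any independent set $S'$ of $(G_x)_v$ extends to the independent set $S=S'\cup\{x\}$ of $G_v$, and a set-theoretic check gives the identity $N_{G_x}(v)-N_{G_x}(S')=N_G(v)-N_G(S)$, the decisive point being that $x\notin N_G(v)$, so deleting $N_G[x]$ rather than $N_G(x)$ leaves $N_G(v)$ unchanged. Hence (b) for $G$ transfers verbatim to (b) for $(G_x,v)$, the induction applies, and $\alpha(G_x-v)=\alpha(G_x)=\alpha(G-v)-1$ follows since $v$ is non-isolated in the well-covered graph $G_x$. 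With all localizations of $G-v$ thus controlled, Theorem \ref{mth1} delivers $G-v\in\mathbf{W}_p$, closing the induction.
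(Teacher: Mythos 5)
Your proposal is correct, and it splits naturally into a part that mirrors the paper and a part that is genuinely different. The equivalence (b) $\Leftrightarrow$ (c) and the direction (b) $\Rightarrow$ (a) follow the paper's route almost exactly: the same reduction via Theorem \ref{mth1}, the same case split on whether $xv\in E(G)$, and the same transfer of condition (b) to the pair $(G_x,v)$ via $S'\mapsto S'\cup\{x\}$ using Lemma \ref{commutativity} (the paper inducts on $\alpha(G)$ rather than $n(G)$, and records the transfer only as the inequality $\left\vert N_{G_x}(v)-N_{G_x}(S')\right\vert \ge \left\vert N_G(v)-N_G(S'\cup\{x\})\right\vert$, whereas you prove the sharper set equality; both differences are immaterial). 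Where you diverge is (a) $\Rightarrow$ (b). The paper argues by contradiction directly from the definition of $\mathbf{W}_p$: if $N_G(v)-N_G(S)=\{u_1,\ldots,u_t\}$ with $t\le p-1$, the pairwise disjoint independent sets $\{u_1\},\ldots,\{u_t\},S$ of $G-v$ extend to pairwise disjoint maximum independent sets of $G-v$, and the one containing $S$ avoids every $u_i$ as well as $N_G(S)$, hence avoids $N_G(v)$ entirely, producing an independent set of size $\alpha(G)+1$. You instead localize at $S$: Lemma \ref{key2}(b) applied to both $G$ and $G-v$ gives $G_S\in\mathbf{W}_p$ and $(G-v)_S=G_S-v\in\mathbf{W}_p$, and the component $H$ of $v$ in $G_S$ is then handled by Lemma \ref{disconnected} together with either the degree bound of Lemma \ref{lem_key}(b) (when $H$ is non-complete) or the vertex-count bound $n\ge p$ applied to $H-v$ (when $H$ is complete, where Lemma \ref{lem_key}(b) says nothing). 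Both arguments are sound; the paper's is self-contained, needing only the defining packing property of $\mathbf{W}_p$, while yours leverages the structural lemmas already in hand and makes the meaning of (b) transparent — it is precisely the assertion that $v$ has degree at least $p$ in every localization $G_S$, which the component dichotomy forces for $\mathbf{W}_p$ graphs. One detail worth making explicit: your complete-component case needs $v$ to be non-isolated in $G_S$ (so that $H\ne\{v\}$), which Lemma \ref{key2}(b) supplies only for $p\ge 2$; since, like the paper, you dispatch $p=1$ through Theorem \ref{thm_G-v}, this creates no gap.
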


\begin{proof} By   assumption, $G$ is well-covered and $v$ is an isolated vertex, so $\alpha(G-v)=\alpha(G)$.
\vskip0.5em    
(b) $\Longleftrightarrow $ (c):  Let $S$ be an independent set of $G_{v}$. The
claim is clear, because 
\begin{equation*}
\left\vert N_{G_{S}}(v)\right\vert =\left\vert N_{G}(v)-N_{G}(S)\right\vert .
\end{equation*}

\vskip0.5em  (a) $\Longrightarrow $ (b): Suppose there exists an independent
set $S$ in $G_{v}$ such that 
\begin{equation*}
\left\vert N_{G}(v)-N_{G}(S)\right\vert \leq p-1.
\end{equation*}%
Set $\left\vert N_{G}(v)-N_{G}(S)\right\vert =t$. In other words, $%
N_{G}(v)-N_{G}(S)=\left\{ u_{1},...,u_{t}\right\} $, where $1\leq t\leq p-1$%
. Each of vertices $u_{i}\in N_{G}(v)-N_{G}(S),1\leq i\leq t$ forms an
independent set $\{u_{i}\}$ in $G$. Clearly, these sets and $S$ are disjoint
in $G-v$. Hence, by definition of a $\mathbf{W}_{p}$ graph, there exists a
family of pairwise disjoint maximum independent sets $S_{1},\ldots
,S_{t},S_{t+1}$ in $G-v$ such that $u_{i}\in S_{i}$ for $1\leq i\leq t$, and 
$S\subseteq S_{t+1}$. Therefore, $u_{1},\ldots ,u_{t},v\notin S_{t+1}$ and 
\begin{equation*}
N_{G}(v)=\{u_{1},\ldots ,u_{t}\}\cup (N_{G}(S)\cap N_{G}(v)).
\end{equation*}

Since $S_{t+1}$ is an independent set containing $S$, we know that $%
N_{G}(S)\cap S_{t+1}=\emptyset $. Thus, 
\begin{equation*}
N_{G}(v)\cap S_{t+1}\subseteq (N_{G}(v)\cap N_{G}(S))\cap S_{t+1}\subseteq
N_{G}(S)\cap S_{t+1}=\emptyset .
\end{equation*}

Hence, $S_{t+1}\cup \{v\}$ is an independent set of $G$. On the other hand,
each $S_{i},1\leq i\leq t+1$ has size $\alpha (G-v)=\alpha (G)$.
Consequently, $S_{t+1}\cup \{v\}$ would be an independent set in $G$ of size 
$\alpha (G)+1$, contradicting the definition of $\alpha (G)$.

\vskip0.5em (b) $\Longrightarrow $ (a): 
If $p=1$, the   assertion follows directly from Theorem \ref{thm_G-v}. We now consider the case   $p\ge 2$. 
 First, taking $S=\emptyset $ gives $%
N_{G}(S)=\emptyset $. Therefore, for each vertex $v\in V(G)$, we have 
\begin{equation*}
\left\vert N_{G}(v)\right\vert =\left\vert N_{G}(v)-N_{G}(S)\right\vert \geq
p.
\end{equation*}

Further, we proceed by the induction on $\alpha (G)$. If $\alpha (G)=1$, then $G$ is a complete graph on at least $p+1$ vertices.
Consequently, $G-v$ is a complete graph on at least $p$ vertices, and thus $%
G-v\in \mathbf{W}_{p}$.

Assume $\alpha (G)\geq 2$. By Theorem \ref{mth1}, it is sufficient to prove
that $(G-v)_{x}\in \mathbf{W}_{p}$, and $\alpha ((G-v)_{x})=\alpha (G-v)-1$
for each $x\in V(G-v)$.

In what follows, we distinguish between   two following cases: \vskip0.5em
\noindent \emph{Case 1.} Assume that $x$ is adjacent to $v$ in $G$. \vskip%
0.5em \noindent In this situation, we have 
\begin{equation*}
(G-v)_{x}=(G-v)-N_{G}[x]=G-N_{G}[x]=G_{x}.
\end{equation*}%
On the other hand, $\alpha (G_{x})=\alpha (G)-1$, because $G$ is
well-covered. Now, by the assumption and Lemma~\ref{key2}, $G_{x}\in \mathbf{W}_{p}$ and $%
\alpha (G_{x})=\alpha (G)-1$, i.e., $(G-v)_{x}\in \mathbf{W}_{p}$, and 
\begin{equation*}
\alpha ((G-v)_{x})=\alpha (G_{x})=\alpha (G)-1=\alpha (G-v)-1,
\end{equation*}%
as claimed. \vskip0.5em \noindent \emph{Case 2.} Assume that $x$ is not
adjacent to $v$ in $G$. \vskip0.5em \noindent In this situation,   $v\in V(G_x)$. Then  we have 
\begin{equation*}
(G-v)_{x}=G-v-N_{G}[x]=G-N_{G}[x]-v=G_{x}-v.
\end{equation*}%
  By assumption, $G$ is well-covered, and since $v$ is not an isolated vertex of $G$, it follows that $\alpha(G-v)=\alpha(G)$. Furthermore, by Theorem \ref{mth1}, we have $G_x \in \mathbf{W}_p$, $\alpha(G_x)=\alpha(G)-1$, and, by Lemma \ref{key2}(b), $v$ is not an isolated vertex of $G_x$. In addition, Theorem \ref{lem_equiv}(a) ensures that $G_x-v \in \mathbf{W}_{p-1}$ and $\alpha(G_x-v)=\alpha(G_x)$. Therefore, we conclude that 
\begin{equation*}
\alpha ((G-v)_{x})=\alpha \left( G_{x}-v\right) =\alpha \left( G_{x}\right)
=\alpha (G)-1=\alpha (G-v)-1
\end{equation*}%
for each $x\in V(G-v)$.
 
 Let $S$ be an arbitrary independent set of $(G_{x})_{v}$. By Lemma \ref{commutativity}, we have $(G_{x})_{v}=G_{\{x,v\}}$. Hence, $S \cup \{x\}$ forms an independent set in $G_{v}$, and  
\begin{equation*}
\left\vert N_{G_{x}}(v)-N_{G_{x}}(S)\right\vert \geq \left\vert
N_{G}(v)-N_{G}(S\cup \{x\})\right\vert \geq p.
\end{equation*}%
By the induction hypothesis, $G_{x}-v\in \mathbf{W}_{p}$. Hence, $%
(G-v)_{x}\in \mathbf{W}_{p}$ and $\alpha ((G-v)_{x})=\alpha (G-v)-1$ for
each $x\in V(G-v)$, as claimed.
\end{proof}

\section{A characterization of $\protect\alpha$-critical graphs in $\mathbf{W%
}_p$ class} \label{sec3}

  For any edge $ab$ of a graph $G$, recall that $G$ is called \emph{$\alpha$-critical} if $\alpha(G-ab) > \alpha(G)$, equivalently, if $\alpha(G-ab) = \alpha(G)+1$ for every edge $ab \in E(G)$. 
   In \cite[Theorem 3.10]%
  {StaplesThesis},  Staples  proved that
every triangle-free graph  in  $\mathbf{W}_{2}$ is necessarily $\alpha $-critical. The
purpose of this section is to address a question posed by Plummer in \cite[%
Problem 9(b)]{Plum93}, where he raised an open problem concerning the
characterization of $\alpha $-critical graphs within the  $\mathbf{W}%
_{2}$ class. Furthermore, we provide a general characterization of locally triangle-free  $\alpha $%
-critical graphs in the  $\mathbf{W}_{p}$ class.

\begin{lemma}
\label{disconnected1} Let $G_{1},\ldots ,G_{k}$ be all the connected
components of $G$. Then $G$ is $\alpha $-critical if and only if all $G_i$
are also $\alpha $-critical for all $1\le i\le k$.
\end{lemma}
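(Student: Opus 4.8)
The plan is to reduce the global statement about $\alpha$-criticality of $G$ to a statement about edges living inside a single connected component. The key observation is that an edge $ab$ always has both endpoints in the same connected component $G_i$, and that deleting such an edge cannot affect the independence structure of the other components. Concretely, I would argue that the independence number is additive over connected components: if $G_1,\ldots,G_k$ are the components of $G$, then $\alpha(G)=\sum_{i=1}^{k}\alpha(G_i)$, since a maximum independent set of $G$ is obtained by taking the (disjoint) union of maximum independent sets in each component, and conversely.

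The main technical step is to relate $\alpha(G-ab)$ to the component decomposition. Fix an edge $ab$, and let $G_j$ be the unique component containing both $a$ and $b$. Then $G-ab$ has the same components as $G$ except that $G_j$ is replaced by $G_j-ab$ (which may or may not remain connected, but its vertex set is the same as that of $G_j$). Using additivity again,
\begin{equation*}
\alpha(G-ab)=\alpha(G_j-ab)+\sum_{i\neq j}\alpha(G_i).
\end{equation*}
Subtracting the expression for $\alpha(G)$ yields the clean identity $\alpha(G-ab)-\alpha(G)=\alpha(G_j-ab)-\alpha(G_j)$. Thus $ab$ is a critical edge of $G$ if and only if it is a critical edge of the component $G_j$ in which it lies.

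With this identity in hand both directions follow immediately. For the forward direction, if $G$ is $\alpha$-critical, then every edge $ab$ is critical in $G$, hence critical in its containing component $G_j$; since every edge of every $G_i$ arises this way, each $G_i$ is $\alpha$-critical. For the converse, if each $G_i$ is $\alpha$-critical, then any edge $ab\in E(G)$, lying in some $G_j$, is critical in $G_j$ and therefore critical in $G$ by the identity; as $ab$ was arbitrary, $G$ is $\alpha$-critical.

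I expect the only real obstacle to be the careful bookkeeping of the additivity of $\alpha$ over disjoint unions and the verification that deleting $ab$ leaves all components other than $G_j$ untouched. These are routine, but one must be slightly careful to note that $G_j-ab$ need not be connected—this is harmless, since the additivity identity only uses the vertex partition of $G-ab$ refining that of $G$, and $\alpha$ remains additive regardless of how many pieces $G_j-ab$ splits into.
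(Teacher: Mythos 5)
Your proof is correct. The paper states this lemma without proof, treating it as evident, and your argument—additivity of $\alpha$ over disjoint unions plus the observation that deleting an edge $ab$ only alters the component containing it, yielding $\alpha(G-ab)-\alpha(G)=\alpha(G_j-ab)-\alpha(G_j)$—is precisely the standard reasoning that justifies it, handled carefully (including the harmless possibility that $G_j-ab$ disconnects).
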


\begin{lemma}
{\rm(\cite[Lemma 4.1]{HT16})} \label{HoangTrung_lem1} If $G_{ab}$ is
well-covered graph and $\alpha (G_{ab})=\alpha (G)-1$ for every $ab\in E(G)$, then $G$ is well-covered.
\end{lemma}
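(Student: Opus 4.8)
The plan is to prove the statement by induction on $\alpha(G)$, using the vertex-localization characterization of Lemma \ref{CP88a} as the engine and the commutativity of localization (Lemma \ref{commutativity}) to transfer the hypothesis down to vertex-deleted localizations. If $\alpha(G)\le 1$, then $G$ is complete (or empty) and hence trivially well-covered; this will serve as the base case. So I would assume $\alpha(G)=k\ge 2$ and that the lemma holds for every graph of strictly smaller independence number.

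First I would establish that $\alpha(G_v)=\alpha(G)-1$ for every $v\in V(G)$. The bound $\alpha(G_v)\le\alpha(G)-1$ is immediate, since adjoining $v$ to any maximum independent set of $G_v=G-N_G[v]$ produces an independent set of $G$. For the reverse bound, if $v$ is isolated then $G_v=G-v$ and deleting an isolated vertex lowers the independence number by exactly one; if instead $v$ has a neighbor $b$, then every maximum independent set of $G_{vb}$ avoids $N_G(v)$ and $v$, hence lies inside $V(G_v)$, and by hypothesis it has size $\alpha(G)-1$, giving $\alpha(G_v)\ge\alpha(G)-1$.

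The heart of the argument is to show that each localization $G_v$ again satisfies the hypothesis of the lemma, so that the induction hypothesis yields that $G_v$ is well-covered. Fixing $v$ and an edge $cd$ of $G_v$, the containments $c,d\notin N_G[v]$ and $v\notin N_G(c)\cup N_G(d)$ let me apply Lemma \ref{commutativity} with $S=\{v\}$ and $T=\{c,d\}$ to obtain the key identity $(G_v)_{cd}=(G_{cd})_v$. By hypothesis $G_{cd}$ is well-covered with $\alpha(G_{cd})=k-1$; when $k\ge 3$ we have $\alpha(G_{cd})>1$, so Lemma \ref{CP88a} applied to $G_{cd}$ shows that $(G_{cd})_v$ is well-covered with $\alpha((G_{cd})_v)=k-2=\alpha(G_v)-1$. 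Thus every edge-localization of $G_v$ is well-covered of the correct independence number, so $G_v$ satisfies the hypothesis and is well-covered by induction.

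Having shown $G_v$ well-covered with $\alpha(G_v)=\alpha(G)-1$ for every $v$, the converse direction of Lemma \ref{CP88a} (valid since $\alpha(G)>1$) delivers that $G$ is well-covered, closing the induction. I expect the main obstacle to be the bookkeeping at the low end of the induction: when $k=2$ one has $\alpha(G_{cd})=1$, so Lemma \ref{CP88a} is not directly applicable, and instead one must observe that $\alpha(G_v)=1$ forces $G_v$ to be complete and hence well-covered outright; likewise, isolated vertices of $G$ must be handled separately, checking directly that $G-v=G_v$ inherits the hypothesis because deleting an isolated vertex commutes with edge-localization and drops each relevant independence number by one.
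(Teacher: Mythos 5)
Your argument is correct. Note, however, that the paper does not prove Lemma \ref{HoangTrung_lem1} at all --- it is quoted from \cite[Lemma 4.1]{HT16} --- so there is no in-paper proof to compare against; what you have written is an independent, self-contained derivation using only the paper's own toolkit. Concretely: your Step 1 ($\alpha(G_v)=\alpha(G)-1$ for every $v$) is right, with both sub-cases handled correctly (isolated $v$, and $v$ with a neighbour $b$, where $V(G_{vb})\subseteq V(G_v)$ gives the lower bound); the key identity $(G_v)_{cd}=(G_{cd})_v$ is a legitimate use of Lemma \ref{commutativity}, since $c,d\notin N_G[v]$ forces $v\notin N_G[\{c,d\}]$ as well; and for $\alpha(G)\ge 3$ the forward direction of Lemma \ref{CP88a}, applied to the well-covered graph $G_{cd}$ whose independence number $\alpha(G)-1$ exceeds $1$, passes the hypothesis down to $G_v$, so the induction hypothesis applies to $G_v$ and the backward direction of Lemma \ref{CP88a} closes the argument. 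Your handling of the bottom of the induction is also correct: when $\alpha(G)=2$, each $G_v$ is complete, hence well-covered with no inductive appeal. One minor simplification: the special treatment of isolated vertices is needed only in Step 1; the commutativity step works verbatim when $v$ is isolated (there $(G_v)_{cd}=(G_{cd})_v=G_{cd}-v$), so your closing caveat about isolated vertices is harmless but superfluous. Your inductive localization scheme is in the same spirit as the proofs the paper does give (e.g.\ Theorems \ref{mth1} and \ref{mthm1}), so it fits the paper's methods even though the original proof from \cite{HT16} is not reproduced there.
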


The following lemma was originally established in \cite[Theorem 4.7(d)]{JV21} with a proof formulated in the language of commutative algebra. In what follows, we present a simpler proof relying solely on combinatorial arguments.  

\begin{lemma}
\label{Gab_critical} $G$ is $\alpha $-critical if and only if $\alpha
(G_{ab})=\alpha (G)-1$ for each $ab\in E(G)$.
\end{lemma}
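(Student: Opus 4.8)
The plan is to prove both directions by unpacking the definitions of $\alpha$-critical and of the localization $G_{ab} = G - (N_G(a) \cup N_G(b))$, keeping in mind the relation $\alpha(G_{ab}) = \alpha(G-ab)$ that the introduction already records when discussing critical edges. First I would observe that for any edge $ab$, a maximum independent set of $G-ab$ either avoids the edge $ab$ entirely (in which case it is already independent in $G$, giving size $\alpha(G)$) or uses both $a$ and $b$ (since adding the single edge back is the only constraint removed). So the only way $\alpha(G-ab)$ can exceed $\alpha(G)$ is via an independent set of $G-ab$ containing both $a$ and $b$.

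The key bookkeeping step is to translate ``independent set of $G-ab$ containing both $a$ and $b$'' into ``independent set of $G_{ab}$ of a prescribed size.'' If $T$ is independent in $G-ab$ with $a,b \in T$, then no vertex of $T$ other than $a,b$ can lie in $N_G(a) \cup N_G(b)$ (such a vertex would violate independence even in $G-ab$, since only the single edge $ab$ was deleted). Hence $T - \{a,b\} \subseteq V(G_{ab})$ and is independent there, so $\alpha(G_{ab}) \geq |T| - 2$. Conversely, given any independent set $W$ of $G_{ab}$, no vertex of $W$ is adjacent in $G$ to $a$ or $b$, so $W \cup \{a,b\}$ is independent in $G-ab$ (the pair $a,b$ is non-adjacent there), yielding $\alpha(G-ab) \geq \alpha(G_{ab}) + 2$. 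Combining these with the trivial bound $\alpha(G-ab) \geq \alpha(G)$ pins down the equivalence: $\alpha(G-ab) = \alpha(G)+1$ holds if and only if $\alpha(G_{ab}) = \alpha(G)-1$.

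Concretely, I would argue as follows. For the forward direction, suppose $G$ is $\alpha$-critical, so $\alpha(G-ab) = \alpha(G)+1$ for the given edge. A maximum independent set $T$ of $G-ab$ must contain both $a$ and $b$ (otherwise it would be independent in $G$, forcing $|T| \leq \alpha(G)$, a contradiction). By the containment above, $T - \{a,b\}$ is independent in $G_{ab}$, so $\alpha(G_{ab}) \geq |T|-2 = \alpha(G)-1$. For the reverse inequality, any maximum independent set of $G_{ab}$ extended by $\{a,b\}$ is independent in $G-ab$, so $\alpha(G_{ab}) + 2 \leq \alpha(G-ab) = \alpha(G)+1$, giving $\alpha(G_{ab}) \leq \alpha(G)-1$; hence equality. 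For the converse, assume $\alpha(G_{ab}) = \alpha(G)-1$ for every edge $ab$. Then extending a maximum independent set of $G_{ab}$ by $\{a,b\}$ produces an independent set of $G-ab$ of size $\alpha(G)+1$, so $\alpha(G-ab) \geq \alpha(G)+1$; together with $\alpha(G-ab) \leq \alpha(G)+1$ this forces $\alpha(G-ab) = \alpha(G)+1$, i.e.\ $ab$ is critical. As this holds for every edge, $G$ is $\alpha$-critical.

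I do not anticipate a serious obstacle here, since the statement is essentially a definitional reformulation; the only point demanding care is the containment claim $T - \{a,b\} \subseteq V(G_{ab})$, which relies on the fact that deleting the single edge $ab$ cannot introduce any new adjacency, so every vertex of $T$ distinct from $a$ and $b$ that was independent from $a$ and $b$ in $G-ab$ was already non-adjacent to them in $G$. I would state this observation explicitly to make the combinatorial proof self-contained, thereby replacing the commutative-algebra argument of \cite{JV21}.
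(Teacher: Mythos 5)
Your proof is correct and follows essentially the same route as the paper's: both directions rest on the correspondence between independent sets of $G-ab$ containing both $a$ and $b$ and independent sets of $G_{ab}$, with exactly the same counting. One caveat: the relation $\alpha(G_{ab}) = \alpha(G-ab)$ you cite in your opening paragraph is false for every edge (your own construction shows $\alpha(G-ab) \geq \alpha(G_{ab}) + 2$); it seems to stem from a typo in the paper's introduction, where $\alpha(G_{ab}) = \alpha(G)+1$ should read $\alpha(G-ab) = \alpha(G)+1$, but since your actual argument never invokes this relation, nothing is affected.
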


\begin{proof}
($\Longrightarrow $) For each edge $ab\in E(G)$, we have $\alpha
(G_{ab})\leq \alpha (G)-1$. Since $G$ is $\alpha $-critical,  $\alpha
(G-ab)=\alpha (G)+1$. Therefore, there exists an\textbf{\ }independent set
of $G-ab$ that contains both $a$ and $b$, say $S$, such that  $\left\vert
S\right\vert =\alpha (G-ab)=\alpha (G)+1$.

Define $S^{\prime }=S-\{a,b\}$. Then $S^{\prime }$ is an independent set in $%
G_{ab}$ and $\left\vert S^{\prime }\right\vert \leq \alpha (G_{ab})$.
Hence, 
\begin{equation*}
\left\vert S^{\prime }\right\vert =\left\vert S\right\vert -2=\alpha
(G)-1\leq \alpha (G_{ab})\leq \alpha (G)-1.
\end{equation*}
Therefore, $\alpha (G_{ab})=\alpha (G)-1$.

\vskip0.5em \noindent ($\Longleftarrow $) For each $ab\in E(G)$, by the assumption, $\alpha (G_{ab})=\alpha (G)-1$. Let $S$ be a maximum
independent set in $G_{ab}$, so $\left\vert S\right\vert =\alpha (G)-1$.
Then $S\cup \{a,b\}$ is an independent set in $G-ab$, so $\left\vert S\cup
\{a,b\}\right\vert \leq \alpha (G-ab)$. Hence, $\alpha(G)< \alpha(G-ab).$ 
 \end{proof}

The following theorem was proved in the case $p=2$ and $G$ is triangle-free
graph in \cite[Lemma 4.2]{HT16}.

\begin{theorem}
\label{mthm1} Let $p\geq 2$ and $G$ be a graph with $\alpha(G)>1$. If $%
G_{ab}\in \mathbf{W}_{p-1}$ and $\alpha (G_{ab})=\alpha (G)-1$ for every $%
ab\in E(G)$, then $G\in \mathbf{W}_{p}$ and $\alpha $-critical.
\end{theorem}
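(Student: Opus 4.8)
The $\alpha$-criticality is immediate: the hypothesis $\alpha(G_{ab})=\alpha(G)-1$ for every $ab\in E(G)$ is precisely the criterion of Lemma \ref{Gab_critical}. Moreover each $G_{ab}$ lies in $\mathbf{W}_{p-1}\subseteq\mathbf{W}_{1}$ and is therefore well-covered, so Lemma \ref{HoangTrung_lem1} shows that $G$ itself is well-covered; in particular $\alpha(G_{x})=\alpha(G)-1$ for every $x\in V(G)$ by Lemma \ref{CP88a}. Hence the entire content of the theorem is the membership $G\in\mathbf{W}_{p}$.

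To establish $G\in\mathbf{W}_{p}$ the plan is to induct on $\alpha(G)$ (equivalently on $n(G)$) and to apply the criterion of Theorem \ref{mth1}: since the equality $\alpha(G_{x})=\alpha(G)-1$ is already in hand, it suffices to prove $G_{x}\in\mathbf{W}_{p}$ for every $x$. The engine driving the induction is a commutation identity. For an edge $cd$ of $G_{x}$ one has $c,d\notin N_{G}[x]$ and $x\notin N_{G}(c)\cup N_{G}(d)$, so Lemma \ref{commutativity} (with $S=\{x\}$, $T=\{c,d\}$, noting that $G_{cd}$ is the localization at the edge $\{c,d\}$) gives $(G_{x})_{cd}=(G_{cd})_{x}$. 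When $\alpha(G)\ge 3$ we have $\alpha(G_{cd})=\alpha(G)-1\ge 2$, so Lemma \ref{key2}(a) yields $(G_{cd})_{x}\in\mathbf{W}_{p-1}$ and well-coveredness yields $\alpha((G_{x})_{cd})=\alpha(G_{x})-1$. Passing to components by Lemma \ref{disconnected}, every component $C$ of $G_{x}$ with $\alpha(C)\ge 2$ then satisfies the hypotheses of the theorem with strictly smaller independence number, hence lies in $\mathbf{W}_{p}$ by the induction hypothesis.

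Everything difficult is thus concentrated in the complete-graph pieces: the base case $\alpha(G)=2$, where $G_{x}$ is itself a clique, and the components $W$ of $G_{x}$ with $\alpha(W)=1$ that appear in the inductive step. Since $K_{m}\in\mathbf{W}_{p}$ precisely when $m\ge p$, the whole problem reduces to the bound $|W|\ge p$. In the base case this has a clean proof that I expect to be the template: $W=V(G_{x})$ is a clique all of whose edges to the rest of $G$ enter $N_{G}(x)$, and by connectivity some $z\in N_{G}(x)$ has a neighbour $w_{0}\in W$; localizing at the edge $xz$ one checks $V(G_{xz})=W\setminus N_{G}(z)$, so $G_{xz}\in\mathbf{W}_{p-1}$ forces $|W\setminus N_{G}(z)|\ge p-1$, and adjoining $w_{0}\in W\cap N_{G}(z)$ gives $|W|\ge p$. (Here nonemptiness of $G_{xz}$ already rules out $z$ dominating $W$.)

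The step I expect to be the main obstacle is carrying this count to a clique component $W$ of $G_{x}$ when $\alpha(G)\ge 3$. Now $V(G_{xz})$ also contains remnants of the other components of $G_{x}$, so one can no longer read off $W\setminus N_{G}(z)$ directly; instead one must (i) exclude the possibility that every neighbour of $x$ touching $W$ dominates it, and (ii) observe that, $W$ being a component of $G_{x}$, the clique $W\setminus N_{G}(z)$ forms a union of components of $G_{xz}\in\mathbf{W}_{p-1}$ and hence has at least $p-1$ vertices. Connectivity of $G$ is indispensable for this bound: without it the statement fails, since $K_{p-1}\sqcup K_{p-1}$ satisfies every hypothesis yet lies outside $\mathbf{W}_{p}$. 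The crux, therefore, is to squeeze out of the hypothesis ``$G_{cd}\in\mathbf{W}_{p-1}$ for all edges $cd$'' a neighbour of $x$ that meets but does not dominate $W$, thereby certifying $|W|\ge p$.
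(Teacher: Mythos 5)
Your skeleton coincides with the paper's proof: $\alpha$-criticality from Lemma \ref{Gab_critical}, well-coveredness of $G$ from Lemmas \ref{HoangTrung_lem1} and \ref{CP88a}, then induction on $\alpha(G)$ driven by Theorem \ref{mth1} and the commutation $(G_x)_{cd}=(G_{cd})_x$; your base case $\alpha(G)=2$ is complete and is essentially the paper's argument. However, what you yourself call the crux --- certifying $|W|\ge p$ for a clique component $W$ of $G_x$ when $\alpha(G)\ge 3$, i.e.\ producing a neighbour of $x$ that meets but does not dominate $W$ --- is left unproved, so the proposal as written has a genuine gap. The missing ingredient is not the hypothesis ``$G_{cd}\in\mathbf{W}_{p-1}$'' but the \emph{other} hypothesis, applied to the edge $xz$ itself: if $z\in N_G(x)$ has a neighbour in $W$ and also $W\subseteq N_G(z)$, then $V(G_{xz})=V(G_x)-N_G(z)\subseteq V(G_x)-W$, so $G_{xz}$ is an induced subgraph of $G_x-W$; since $W$ is an entire component of $G_x$ with $\alpha(W)=1$, this forces $\alpha(G_{xz})\le\alpha(G_x)-1=\alpha(G)-2$, contradicting $\alpha(G_{xz})=\alpha(G)-1$. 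Hence no neighbour of $x$ touching $W$ can dominate it, and your step (ii) closes the count: $W-N_G(z)$ is a nonempty clique component of $G_{xz}\in\mathbf{W}_{p-1}$, so $|W-N_G(z)|\ge p-1$ by Lemma \ref{disconnected}, whence $|W|\ge p$. With this inserted, the rest of your inductive step (components $C$ of $G_x$ with $\alpha(C)\ge 2$ inherit the hypotheses and fall to the induction hypothesis) is sound, and the proof is complete.

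It is worth adding that the paper's own proof does not contain this step either. The paper asserts at the outset that Lemmas \ref{disconnected} and \ref{disconnected1} reduce the theorem to connected graphs, then (for connected $G$) only excludes isolated vertices of $G_x$ before invoking the induction hypothesis on the possibly disconnected graph $G_x$ wholesale; clique components of $G_x$ of order between $2$ and $p-1$ are never confronted. Your remark that connectivity is indispensable is correct and shows that this reduction is invalid: your example $K_{p-1}\sqcup K_{p-1}$ satisfies every hypothesis of the theorem (vacuously when $p=2$; for $p=2$ one can also take $K_2\sqcup K_1$), yet lies outside $\mathbf{W}_p$ by Lemma \ref{lem_key}(a) or Lemma \ref{disconnected}. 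So the theorem is false as printed and must be read with ``$G$ connected'' added. In short, your diagnosis pinpoints a real flaw in the paper, and your outline, once completed by the domination argument above, proves the corrected (connected) statement.
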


\begin{proof}
By Lemmas \ref{disconnected} and \ref{disconnected1}, it is enough to prove
the theorem for connected graphs only. Now we may assume that $G$ is
connected.

By Lemma \ref{Gab_critical}, $G$ is $\alpha $-critical. Moreover, by
definition, $n(G_{ab}) \geq p-1$, since $G_{ab}\in \mathbf{W}_{p-1}$.
Therefore, $\alpha (G_{ab})\geq 1$, so $\alpha (G)>1$. Since $%
G_{ab}\in \mathbf{W}_{p-1}\subseteq \mathbf{W}_{1}$,    $G_{ab}$ is
well-covered and $\alpha (G_{ab})=\alpha (G)-1$ for all $ab\in E(G)$. By Lemma \ref%
{HoangTrung_lem1}, $G$ is also well-covered. Lemma \ref{CP88a} implies that $%
\alpha (G_{x})=\alpha (G)-1$ for all $x\in V(G)$.

  In order to establish that $G \in \mathbf{W}_p$, it is sufficient, by Theorem \ref{mth1}, to verify that $G_x \in \mathbf{W}_p$ for every vertex $x \in V(G)$. We shall prove this claim  by induction on   $\alpha(G)$.  
 
  Suppose first that $\alpha(G)=2$. Then $\alpha(G_{ab})=1$. Since $G$ is well-covered, we have $\alpha(G_x)=\alpha(G)-1=1$ for all $x\in V(G)$, which implies that each $G_x$ is a complete graph. Because $G$ is connected, there exist vertices $y \in N_G(x)$ and $v \in V(G_x)$ such that $vy \in E(G)$. Clearly, $G_{xy}=G_x - N_G(y)$ and $v \in N_G(y) \cap V(G_x)$. By assumption, $G_{xy} \in \mathbf{W}_{p-1}$, and hence
  $n(G_{xy}) \geq p-1$.   
  It follows that 
\begin{equation*}
n(G_{x}) =n(G_{xy}) +\left\vert N_{G}(y)\cap V(G_{x})\right\vert \geq
p-1+1=p.
\end{equation*}
  Moreover, since $\alpha(G_x)=\alpha(G)-1=1$, the graph $G_x$ is complete of order at least $p$. Therefore, $G_x \in \mathbf{W}_p$.

Now, assume that $\alpha (G)\ge 3$. We claim that $G_{x}$ has no isolated
vertices. Indeed, assume $v$ is an isolated vertex of $G_{x}$. Since $G$ is
connected, there is a vertex $w\in N_{G}(x)$ such that $vw\in E(G)$. Then $%
G_{x}=G_{vw}\cup \{v\}$. Then $\alpha (G_{x})=\alpha (G_{xw})+1=\alpha
(G)-1+1=\alpha (G)$, a contradiction.

Let $ab$ be an arbitrary edge of $G_{x}$. By Lemma \ref{commutativity}, we
know that $(G_{x})_{ab}=(G_{ab})_{x}$. According to Theorem \ref{mth1}, $%
(G_{ab})_{x}$ is in $\mathbf{W}_{p-1}$ and $\alpha ((G_{ab})_{x})=\alpha
(G_{ab})-1$. Therefore, $(G_{x})_{ab}\in \mathbf{W}_{p-1}$ and moreover, $$\alpha ((G_{x})_{ab})=\alpha((G_{ab})_x)=\alpha(G_{ab})-1 = \alpha(G)-2= \alpha (G_{x})-1.$$ 
Therefore, $G_x$ is $\alpha$-critical by Lemma \ref{Gab_critical}.  By the induction hypothesis, $G_{x}\in \mathbf{W}_{p}$ for
all $x\in V(G)$.  
\end{proof}

 A graph $G$ is said to be \emph{locally triangle-free} if $G_{x}$ is triangle-free for every $x \in V(G)$. Note that a locally triangle-free graph may still contain a triangle as a subgraph, whereas every triangle-free graph is necessarily locally triangle-free.

\begin{corollary}
Let $p\geq 2$ and $G$ be a locally triangle-free graph with $\alpha(G)>1$.
Then $G_{ab}\in \mathbf{W}_{p-1}$ and $\alpha (G_{ab})=\alpha (G)-1$ for
every $ab\in E(G)$  if and only if $G\in \mathbf{W}_{p}$ and $\alpha $%
-critical.
\end{corollary}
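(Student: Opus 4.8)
The forward implication is already available: for $p\ge 2$ and $\alpha(G)>1$, the assumption that $G_{ab}\in\mathbf{W}_{p-1}$ with $\alpha(G_{ab})=\alpha(G)-1$ for every edge $ab$ is precisely the hypothesis of Theorem \ref{mthm1}, whose conclusion is that $G\in\mathbf{W}_p$ and $G$ is $\alpha$-critical; note that this direction needs neither local triangle-freeness. So the genuine content is the converse. My plan is to assume $G\in\mathbf{W}_p$ is $\alpha$-critical and locally triangle-free and deduce the two stated properties of $G_{ab}$. The equality $\alpha(G_{ab})=\alpha(G)-1$ is immediate from $\alpha$-criticality via Lemma \ref{Gab_critical}, so everything reduces to proving $G_{ab}\in\mathbf{W}_{p-1}$ for every edge $ab$, which I would establish by induction on $\alpha(G)$.

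In the base case $\alpha(G)=2$, criticality forces $\alpha(G_{ab})=1$, so $G_{ab}$ is a nonempty complete graph. Moreover $G_a\in\mathbf{W}_p$ by Lemma \ref{key2}(a); since $G$ is well-covered we have $\alpha(G_a)=1$, so $G_a$ is complete, and by local triangle-freeness it is also triangle-free, whence $n(G_a)\le 2$. Together with $n(G_a)\ge p$ this forces $p=2$, and then $G_{ab}$ is a nonempty complete graph with $p-1=1$, so $G_{ab}\in\mathbf{W}_1=\mathbf{W}_{p-1}$.

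For the inductive step ($\alpha(G)\ge 3$) I would verify $G_{ab}\in\mathbf{W}_{p-1}$ through the localization characterization of Theorem \ref{mth1} applied to $G_{ab}$ (legitimate since $\alpha(G_{ab})=\alpha(G)-1\ge 2$): it suffices to check, for every $y\in V(G_{ab})$, that $(G_{ab})_y\in\mathbf{W}_{p-1}$ and $\alpha((G_{ab})_y)=\alpha(G_{ab})-1$. As $y\notin N_G[a]\cup N_G[b]$, Lemma \ref{commutativity} gives $(G_{ab})_y=(G_y)_{ab}$, so I want to invoke the inductive hypothesis on $G_y$. Here $G_y\in\mathbf{W}_p$ by Lemma \ref{key2}(a), $G_y$ is locally triangle-free, $\alpha(G_y)=\alpha(G)-1<\alpha(G)$, and $ab$ is an edge of $G_y$; induction then yields exactly $(G_y)_{ab}\in\mathbf{W}_{p-1}$ together with $\alpha((G_y)_{ab})=\alpha(G_y)-1=\alpha(G_{ab})-1$, which is what Theorem \ref{mth1} demands.

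The one missing ingredient — and the step I expect to be the crux — is that the inductive hypothesis requires $G_y$ itself to be $\alpha$-critical, yet $\alpha$-criticality is genuinely \emph{not} automatic for locally triangle-free graphs in $\mathbf{W}_p$ (for example $\overline{C_6}$, the triangular prism, lies in $\mathbf{W}_2$ and is locally triangle-free but has a dominating edge, hence is not $\alpha$-critical). The resolution is that the localizations are strictly better behaved than $G$ itself: by the definition of local triangle-freeness, $G_y$ is not merely locally triangle-free but honestly triangle-free, and $G_y\in\mathbf{W}_p\subseteq\mathbf{W}_2$, so Staples' theorem \cite[Theorem 3.10]{StaplesThesis} that every triangle-free graph in $\mathbf{W}_2$ is $\alpha$-critical applies directly to give that $G_y$ is $\alpha$-critical. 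This is exactly the point at which the local-triangle-free hypothesis is used, and it is what allows the induction to close.
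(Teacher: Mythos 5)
Your proof is correct and follows essentially the same route as the paper's: the forward direction via Theorem \ref{mthm1}, and the converse by induction on $\alpha(G)$ using the commutation $(G_{ab})_y=(G_y)_{ab}$ (Lemma \ref{commutativity}), Lemma \ref{key2}(a), Staples' triangle-free theorem to get $\alpha$-criticality of the (honestly triangle-free) localizations $G_y$, and Theorem \ref{mth1} to conclude $G_{ab}\in\mathbf{W}_{p-1}$. The only divergence is the base case $\alpha(G)=2$: the paper invokes the classification of $\mathbf{W}_2$ graphs with independence number two from \cite{HT16} to force $G\cong C_5$, whereas your direct argument (a triangle-free complete $G_a$ forces $p=2$, and a nonempty complete $G_{ab}$ lies in $\mathbf{W}_1$) is self-contained and equally valid.
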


\begin{proof}
($\Longrightarrow$) follows from Theorem \ref{mthm1}.

($\Longleftarrow $) Since $G\in \mathbf{W}_{p}\subseteq \mathbf{W}_{2}$,
according to Lemma \ref{Gab_critical}, $\alpha (G_{ab})=\alpha (G)-1$ for
all $ab\in E(G)$. Therefore, it remains to show that $G_{ab}$ is in $\mathbf{%
W}_{p-1}$ for all $ab\in E(G)$. We prove this by induction on $\alpha (G)$.
If $\alpha (G)=2$, then since $G\in \mathbf{W}_{p}\subseteq \mathbf{W}_{2}$,
it follows from \cite[Proposition 1.7]{HT16} that $G\cong C_{n}^{c}$ for
some $n\geq 4$. Because $G$ is $\alpha $-critical, we must have $n=5$.
Hence, $G\cong C_{5}$, and in this case, the statement clearly holds.

Suppose that $\alpha (G)>2$. For all $x\in V(G_{ab})$, we have 
\begin{equation*}
(G_{ab})_{x}=G_{ab}-N_{G}[x]=G-N_{G}[\{a,b\}]-N_{G}[x]=G-N_{G}[x]-N_{G}[%
\{a,b\}]=(G_{x})_{ab}
\end{equation*}%
Since $G\in \mathbf{W}_{p}$ and $\alpha (G)>1$, by Lemma \ref{key2}(a), $%
G_{x}\in \mathbf{W}_{p}$. Moreover, by the assumption, $G_{x}$ is
triangle-free and thus $G_{x}$ is $\alpha $-critical by \cite[Theorem 3.10]%
{StaplesThesis}. By the induction, $(G_{x})_{ab}\in \mathbf{W}_{p-1}$ and $\alpha
((G_{x})_{ab})=\alpha (G_{x})-1$. Therefore, $(G_{ab})_{x}\in \mathbf{W}%
_{p-1}$ and 
\begin{equation*}
\alpha ((G_{ab})_{x})=\alpha ((G_{x})_{ab})=\alpha (G_{x})-1=\alpha
(G)-2=\alpha (G_{ab})-1.
\end{equation*}%
By Theorem \ref{mth1}, $G_{ab}$ is in $\mathbf{W}_{p-1}$.
\end{proof}

\begin{corollary} Let $p\geq 2$ and $G$ be a   triangle-free graph with $\alpha(G)>1$.
	Then $G_{ab}\in \mathbf{W}_{p-1}$ and $\alpha (G_{ab})=\alpha (G)-1$ for
	every $ab\in E(G)$ if and only if $G\in \mathbf{W}_{p}$. 
\end{corollary}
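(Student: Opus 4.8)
The plan is to obtain this corollary as the triangle-free specialization of the preceding (locally triangle-free) corollary, the essential simplification being that under the triangle-free hypothesis the $\alpha$-critical condition becomes automatic and can therefore be dropped from the right-hand side of the equivalence. Throughout I would use the observation recorded just before the preceding corollary that every triangle-free graph is locally triangle-free: since $G_x = G - N_G[x]$ is an induced subgraph of $G$, it is triangle-free for every $x \in V(G)$, so $G$ is locally triangle-free.

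For the forward implication I would note that no triangle-free hypothesis is needed and that it follows directly from Theorem \ref{mthm1}. Indeed, assuming $G_{ab} \in \mathbf{W}_{p-1}$ and $\alpha(G_{ab}) = \alpha(G) - 1$ for every edge $ab \in E(G)$, that theorem yields $G \in \mathbf{W}_p$ at once (and incidentally that $G$ is $\alpha$-critical, which we need not record here).

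For the reverse implication I would begin from $G \in \mathbf{W}_p \subseteq \mathbf{W}_2$ together with the triangle-free assumption. By Staples' result \cite[Theorem 3.10]{StaplesThesis}, every triangle-free graph in $\mathbf{W}_2$ is $\alpha$-critical, so $G$ is $\alpha$-critical. Since $G$ is moreover locally triangle-free with $\alpha(G) > 1$ and lies in $\mathbf{W}_p$ with $p \geq 2$, all the hypotheses appearing on the right-hand side of the preceding corollary are satisfied, and its reverse implication delivers precisely $G_{ab} \in \mathbf{W}_{p-1}$ and $\alpha(G_{ab}) = \alpha(G) - 1$ for every $ab \in E(G)$.

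The argument has no genuinely hard step; the only point requiring care is the justification that the $\alpha$-critical condition may be omitted from the statement, which is exactly what \cite[Theorem 3.10]{StaplesThesis} supplies in the triangle-free setting. One should merely confirm that the triangle-free hypothesis is inherited by the localizations $G_x$, so that ``locally triangle-free'' genuinely applies, and that $G \in \mathbf{W}_p$ with $p \geq 2$ places $G$ in $\mathbf{W}_2$ via the descending chain $\mathbf{W}_p \subseteq \mathbf{W}_2$; both facts are immediate.
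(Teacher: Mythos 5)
Your proposal is correct and takes exactly the route the paper intends: the paper states this corollary without proof, as an immediate consequence of the preceding locally triangle-free corollary. Your argument --- the forward direction via Theorem \ref{mthm1}, and the reverse direction by observing that triangle-free implies locally triangle-free and invoking Staples' theorem that triangle-free $\mathbf{W}_{2}$ graphs are $\alpha$-critical so that the preceding corollary applies --- is precisely that intended derivation.
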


\begin{example} 
 \begin{enumerate}
\item[(a)]   Figure 1 in \cite{HT18} presents several graphs that are both locally triangle-free in  $\mathbf{W}_2$ and $\alpha$-critical. 
 \item[(b)]  For every  $p \geq 1$, the graph  $G \circ K_p$ belongs  in the   $\mathbf{W}_p$ class, but  it does not    $\alpha$-critical whenever $n(G)>1$. 
  \item[(c)]  For $n_1,n_2,m_1,m_2\ge p$, $(K_{n_1} \cup K_{n_2})+ (K_{m_1}\cup K_{m_2})$ belongs to $\mathbf{W}_p$  class  but not $\alpha$-critical. In particular, it  is locally triangle-free when $n_1,n_2,m_1,m_2\le 2.$   
 \end{enumerate}
 \end{example}

\section*{Conclusion}

The characterizations obtained for the  $\mathbf{W}_{p}$ class naturally
suggest the following.  
\vskip0.5em 
\noindent  
\textbf{Question.} Characterize $\alpha $-critical graphs belonging to the
 $\mathbf{W}_{p}$ class  for $p\geq 1$.
\section*{Acknowledgment}

Do Trong Hoang is also partially supported by NAFOSTED (Vietnam) under the
grant number 101.04-2024.07.

\section*{Declarations}

\noindent \textbf{Conflict of interest/Competing interests} \newline
The authors declare that they have no competing interests\newline
\noindent \textbf{Ethical approval and consent to participate}\newline
\noindent Not applicable. \newline
\noindent \textbf{Consent for publication}\newline
\noindent Not applicable. \newline
\noindent \textbf{Availability of data, code and materials}\newline
\noindent Data sharing not applicable to this work as no data sets were
generated or analyzed during the current study. \newline
\noindent \textbf{Authors' contribution}\newline
All authors have contributed equally to this work.\newline


\begin{thebibliography}{99}
	
	
	\bibitem{BHP67}  L.  W. Beineke, F. Harary,   M. D. Plummer, \emph{On the critical lines of a graph}, 
	Pacific Journal of Mathematics, 
	{\bf 22} (2)  (1967), 205--212. 
	
	
\bibitem{B82} C. Berge, \emph{Some common properties for regularizable
graphs, edge-critical graphs and B-graphs}, Annals of Discrete Mathematics, 
\textbf{12} (1982), 31--44.


\bibitem{BF12} S. Bermudo,    H. Fernau, \emph{Lower bounds on the differential
	of a graph}, Discrete Mathematics,  \textbf{312} (2012), 3236--3250.


\bibitem{CCR16}  I. D. Castrill\'on, R. Cruz,    E. Reyes, {\it On well-covered, vertex decomposable and Cohen–Macaulay graphs}, Electronic Journal of Combinatorics, {\bf  23} (2) (2016), 17 pp. 


\bibitem{EG61} P. Erd\"os,   T. Gallai,  \emph{On the Minimal Number of Vertices Representing 
the Edges of a Graph}, Publications of the Mathematical Institute of the Hungarian Academy of Sciences,  
 {\bf 6} (1961), 181--203. 


\bibitem{Favaron1982} O. Favaron, \emph{Very well-covered graphs}, Discrete
Mathematics, \textbf{42} (1982) 177--187.

\bibitem{FHN93} A. Finbow, B. Hartnell,    R. Nowakowski, \emph{A
characterization of well-covered graphs of girth 5 or greater}, Journal of
Combinatorial Theory. Series B, \textbf{57} (1993) 44--68.

\bibitem{DLMP} D. T. Hoang, V. E. Levit, E. Mandrescu,    M. H. Pham, \emph{On
the unimodality of the independence polynomial of clique corona graphs}
  Available online at SSRN: \url{http://dx.doi.org/10.2139/ssrn.4293649}%
.

\bibitem{HLMP2024} D. T. Hoang, V. E. Levit, E. Mandrescu,    M. H. Pham, \emph{%
\ Log-concavity of the independence polynomials of $\mathbf{W}_{p}$ graphs}, 
  \url{https://doi.org/10.48550/arXiv.2409.00827}.

\bibitem{HT16} D. T. Hoang,    T. N. Trung, \emph{A characterization of
triangle-free Gorenstein graphs and Cohen--Macaulayness of second powers of
edge ideals}, Journal of Algebraic Combinatorics, \textbf{43} (2016) 325--338.


\bibitem{HT18} D. T. Hoang,    T. N. Trung, \emph{Buchsbaumness of the second powers of edge ideals}, Journal of Algebra and Its Applications, \textbf{17} (6) (2018), 1850117.  

\bibitem{JV21}  D.  Jaramillo,    R. H. Villarreal, {\it The v-number of edge ideals}, Journal of Combinatorial Theory. Series A,
 {\bf 177} (2021) 105310.  

\bibitem{LM2017} V. E. Levit,    E. Mandrescu, \emph{The Roller-Coaster
conjecture revisited}, Graphs and Combinatorics, \textbf{33} (2017)
1499--1508.

\bibitem{LM19} V. E. Levit,    E. Mandrescu, $1$\emph{-well-covered graphs
revisited}, European Journal of Combinatorics, \textbf{80} (2019) 261--272.

\bibitem{Pinter} M. R. Pinter,\textit{\ }\emph{A class of planar
well-covered graphs with girth four}, Journal of Graph Theory, \textbf{19}
(1995) 69--81.

\bibitem{Pinter2} M. R. Pinter, \emph{Planar regular one-well-covered graphs}%
, Congressus Numerantium, \textbf{91} (1992) 159--159.

\bibitem{Pinter1991} M. R. Pinter, $\mathbf{W}_{2}$\emph{\ graphs and
strongly well-covered graphs: two well-covered graph subclasses}, Vanderbilt
Univ. Dept. of Math. Ph.D. Thesis, 1991.

\bibitem{Plummer1970} M. D. Plummer, \emph{Some covering concepts in graphs}%
, Journal of Combinatorial Theory, \textbf{8} (1970) 91--98.

\bibitem{Plum93} M. D. Plummer, \emph{Well-covered graphs: survey},
Quaestiones Mathematicae, \textbf{16} (1993) 253--287.

\bibitem{Plum67} M. D. Plummer, \emph{On a family of line-critical graphs}, Monatshefte f\"ur Mathematik, {\bf 71} (1) (1967), 40--48. 

  

\bibitem{StaplesThesis} J. W. Staples, \emph{On some subclasses of
well-covered graphs}, Ph.D. Thesis, 1975, Vanderbilt University.

\bibitem{Staples} J. W. Staples, \emph{On some subclasses of well-covered
graphs}, Journal of Graph Theory, \textbf{3} (1979) 197--204.

\bibitem{TV92} J.~Topp,    L.~Volkman, \emph{On the well--coveredness of
products of graphs}, Ars Combinatoria, \textbf{33} (1992) 199--215.





\bibitem{W09} R. Woodroofe, \emph{Vertex decomposable graphs and obstructions to shellability}, Proceedings of the American Mathematical Society,
  {\bf 137} (2009) 3235--3246. 



\end{thebibliography}
\end{document}